\newtheorem{thm}{Theorem}[section]
\newtheorem{prop}[thm]{Proposition}
\newtheorem{lemma}[thm]{Lemma}
\newtheorem{rem}[thm]{Remark}
\theoremstyle{definition}
\newtheorem{definition}[thm]{Definition}
\newtheorem{scheme}{Scheme}
\numberwithin{equation}{section}
\newcommand{\R}{\mathbb{R}}
\newcommand{\Div}{\nabla \cdot}
\title{Mean field games with nonlinear mobilities in pedestrian dynamics}
\thanks{MTW acknowledges financial support of the Austrian Science Foundation FWF via the Hertha Firnberg Project T456-N23. MDF is supported by the FP7-People Marie Curie CIG (Career Integration Grant) Diffusive Partial Differential Equations with Nonlocal Interaction in Biology and Social Sciences (DifNonLoc), by the `Ramon y Cajal' sub-programme (MICINN-RYC) of the Spanish Ministry of Science and Innovation, Ref. RYC-2010-06412,
and by the by the Ministerio de Ciencia e Innovaci\'on, grant MTM2011-27739-C04-02. }
\begin{document}

\author{Martin Burger$^1$}
\address{$^1$ Institute for Computational and Applied Mathematics, University of M\"unster, Einsteinstrasse 62, 48149 M\"unster, Germany}
 \email{martin.burger@wwu.de}

\author{Marco Di Francesco$^2$}
\address{$^2$ Department of Mathematical Sciences, 4W, 1.14, University of Bath, Claverton Down, Bath (UK), BA2 7AY}
 \email{m.difrancesco@bath.ac.uk}

\author{Peter A. Markowich$^3$}
\address{$^3$ King Abdullah University of Science and Technology, Thuwal 23955-6900, Kingdom of Saudia Arabia}
 \email{P.A.Markowich@damtp.cam.ac.uk}

\author{Marie-Therese Wolfram$^4$}
\address{$^4$ Department of Mathematics, University of Vienna, Nordbergstrasse 15, 1090 Vienna Austria}
\email{marie-therese.wolfram@univie.ac.at}

\maketitle

\begin{abstract}
\noindent In this paper we present an optimal control approach modeling fast exit scenarios in pedestrian crowds.
In particular we consider the case of a large human crowd trying to exit a room as fast as possible.
The motion of every pedestrian is determined by minimizing a cost functional,
which depends on his/her position, velocity, exit time and the overall density of people.
This microscopic setup leads in the mean-field limit to a parabolic optimal control problem.
We discuss the modeling of the macroscopic optimal control approach and show how the optimal conditions relate
to Hughes model for pedestrian flow. Furthermore we provide results on the existence and uniqueness of minimizers
and illustrate the behavior of the model with various numerical results.\\
\end{abstract}

\section{Introduction}

\noindent Mathematical modeling of human crowd motion such as streetway pedestrian flows or the evacuation of large buildings is a topic of high
 practical relevance. The complex behavior of human crowds poses a significant challenge in the modeling and a majority of questions
still remains open. Consequently it receives increasing attention, and in particular in the last years strong development in the mathematical literature becomes visible. A variety of different approaches has been proposed and partly analyzed, which can roughly be grouped as follows:
\begin{enumerate}

\item \textit{Microscopic force-based models} such as the social force model (cf. \cite{HM1998, HFV2000, CWSS2011}) and their continuum limits.

\item \textit{Cellular automata} approaches (discrete random walks with exclusion principles \cite{BKSZ2001,BA2001}) and their continuum limits (cf. \cite{BMP2011}).

\item \textit{Fluid-dynamical and related macroscopic models} (cf. \cite{CGLM2012,Metal2012,CGR2010,ADM2011}), among which in particular the Hughes model and its variants received strong attention (cf. \cite{H2002,DFMPW2011, ADF2012,GM2012}).
\item Microscopic \textit{optimal control} and \textit{game} approaches (cf. \cite{HB2004,BPSML2008})  and \textit{mean field game} models (cf. \cite{LW2011,D2010}), which can be considered as the appropriate continuum limit (cf. \cite{LL2007, GLL2011}).

\end{enumerate}

\noindent For evacuation scenarios an additional complication in the basic model arises, namely how the boundaries (i.e. doors, impermeable walls) and the goal of the pedestrians to leave the room as quickly as possible are modeled. A canonical approach is to use a potential force (or drift in random walk models) based on the distance function to the doors. This means that the potential $\phi$ solves some kind of Eikonal equation in the domain $\Omega$ with zero Dirichlet boundary conditions on the part $\Gamma_E \subset \partial \Omega$, which corresponds to the door. The appropriate statement of the Eikonal equation and its boundary conditions in a crowded situation is a delicate issue, which is mainly done in an ad-hoc fashion (even for detailed microscopic models) and has led to different directions. Some models simply use the distance function itself, i.e. the viscosity solution of
\begin{equation}
	|\nabla \phi| = 1 \qquad \text{in } \Omega,
\end{equation}
which corresponds to an overall optimization of the evacuation path independent of other pedestrians. This might be questionable in crowds due to limited visibility or global changes of the evacuation path to avoid jamming regions. Other models use variants of the Eikonal equation that incorporate the density $\rho$ of pedestrians. The celebrated Hughes model, cf. \cite{H2002}, uses an equation of the form
\begin{equation}
 f(\rho)	|\nabla \phi| = 1 \qquad \text{in } \Omega,
\end{equation}
where $f$ is a function introducing saturation effects such as $f(\rho) = \rho_{\max} - \rho$, where $\rho_{\max}$ is a maximal density.

\noindent In this paper we introduce a rather unifying mean-field game approach to the modeling of evacuation scenarios, which links several of the above mentioned models and provides further understanding of several issues. This mean field framework provides interpretations of different forms of the Eikonal equation and covers the modeling of `realistic' boundary conditions. We provide a detailed mathematical analysis and numerical simulations of our approach and discuss interesting special cases and limits, in particular its connection to Hughes' model.

\noindent This paper is organized as follows: In Section \ref{s:model} we present the modeling on the microscopic level and its mean-field limit. Furthermore
we discuss its relation to Hughes' model and the appropriate choice of boundary conditions. Section \ref{s:analysis} is devoted to the
analysis of the macroscopic optimal control approach. Finally we present a steepest descent approach to solve the parabolic
optimal control problem in Section \ref{s:numerics} and illustrate the behavior with several numerical examples.

\section{Mathematical Modeling}\label{s:model}

\noindent In the following we discuss a model paradigm based on the idea of fast exit. At the single particle level, this yields a classical or stochastic optimal control problem, which we reformulate as PDE-constrained optimization model for the particle density. From the optimality conditions we then obtain a version of the Eikonal equation as the adjoint problem.

\subsection{Fast Exit of Particles}

Let us start with a deterministic particle (of unit mass) trying to leave the domain $\Omega$ as fast as possible. Let $X = X(t)$ denote the particle trajectory and
$$ T_{exit}(X) = \sup\{ t > 0 ~|~X(t) \in \Omega\}. $$
Then it makes sense to look for a weighted minimization of the exit time $T_{exit}$ and the kinetic energy, i.e.
\begin{equation}\label{eq:energy1}
	\frac{1}2 \int_0^{T_{exit}} |V(t)|^2~dt + \frac{\alpha}2 T_{exit}(X) \rightarrow \min_{X,V} ,
\end{equation}
subject to $\dot{X}(t) = V(t)$, $X(0) = X_0$, where $\alpha > 0$ encodes the weighting of the fast exit.

\noindent Introducing the Dirac measure $\mu = \delta_{X(t)},$  and a final time $T$ sufficiently large, we can rewrite the functional \eqref{eq:energy1} as
\begin{equation}\label{eq:energy2}
 I_T(\mu,v) = \frac{1}2 \int_0^T \int_\Omega |v(x,t)|^2 d\mu~dt + \frac{\alpha}2 \int_0^T \int_\Omega d\mu~dt ,
\end{equation}
subject to
\begin{equation}
	\partial_t \mu + \nabla \cdot (\mu v) = 0,
\end{equation}
and initial value $\mu|_{t=0} = \delta_{X_0}$.
Note that for $ T > T_{exit}$
$$ T_{exit}(X) = \int_0^T \int_\Omega ~d\delta_{X(t)} ~dt $$
and
$$  \int_0^T \int_\Omega |v(x,t)|^2 d\mu~dt  = \int_0^T \int_\Omega |v(x,t)|^2 ~d\delta_{X(t)}~dt =
\int_0^{T_{exit}} |v(X(t),t)|^2~dt, $$
which yields the equivalence between the particle formulation \eqref{eq:energy1} and the continuum version \eqref{eq:energy2} by the standard Lagrange-Euler transform of the velocity $V(t) = v(X(t),t)$, cf. \cite{HB2004,BPSML2008}.

\noindent Next we consider a stochastic particle, which moves according to the Langevin equation. Then we obtain
\begin{subequations}\label{e:stochopt}
\begin{equation}
	dX(t) = V(t)~dt + \sigma~dW(t), \label{Langevin}
\end{equation}
where $W$ a Wiener process and $\sigma$ the diffusivity. It is natural to consider the stochastic optimal control problem
\begin{equation}
\mathbb{E}_{X_0} \left[	\frac{1}2 \int_0^{T_{exit}} |V(t)|^2~dt + \frac{\alpha}2 T_{exit}(X)
\right] \rightarrow \min_{V}
\end{equation}
\end{subequations}
with the random variable $X$ determined by \eqref{Langevin} with initial value $X_0$. Reformulating \eqref{e:stochopt} for the distribution already reveals the structure of a mean-field game for the particle density. That is, writing $d\mu = \rho ~dx$ we obtain the minimization functional
\begin{subequations}
\begin{equation}
 I_T(\rho,v) = 	\frac{1}2 \int_0^T \int_\Omega \rho(x,t)~ |v(x,t)|^2  dx~dt + \frac{1}2 \int_0^T \int_\Omega \rho(x,t)~dx~dt ,
\end{equation}
subject to
\begin{equation}
	\partial_t \rho + \nabla \cdot (\rho v) = \frac{\sigma^2}2 \Delta \rho,~\rho(x,0) = \rho_0(x). \label{stateeqn}
\end{equation}
\end{subequations}

\subsection{Optimality, Eikonal Equations, and Mean-Field Games}

In the following we provide an explanatory derivation of the optimality conditions for the optimization problem
\begin{equation}
 	I_T(\rho,v) \rightarrow \hspace*{-1cm}\min_{(\rho,v) \text{ satisfying } \eqref{stateeqn}}\hspace*{-1cm}.
\end{equation}
We shall derive the optimality condition at a formal level, without providing rigorous details on the functional setting and on the boundary conditions. We shall be more precise later on a more general model, see subsection \ref{s:bc}.

\noindent We define the Lagrangian with dual variable $\phi = \phi(x,t)$ as
\begin{equation}
	L_T(\rho,v,\phi) = I_T(\rho,v) + \int_0^T \int_\Omega (
	\partial_t \rho + \nabla \cdot (\rho v) - \frac{\sigma^2}2 \Delta \rho) \phi~dx~dt.
\end{equation}
For the optimal solution we have, in addition to \eqref{stateeqn}, the following equations, i.e.
\begin{equation}
	0 = \partial_v L_T(\rho,v,\phi) = \rho v - \rho \nabla \phi,
\end{equation}
and
\begin{equation}
	0 = \partial_\rho L_T(\rho,v,\phi) = \frac{1}2 |v|^2 + \frac{\alpha}2 - \partial_t \phi - v \cdot \nabla \phi - \frac{\sigma^2}2 \Delta \phi,
\end{equation}
with the additional terminal condition $\phi(x,T) = 0$. Inserting $v=\nabla \phi$ we obtain the following system, which has the structure of a mean field game, i.e.
\begin{subequations}
\begin{align}
	\partial_t \rho + \nabla \cdot ( \rho \nabla \phi ) - \frac{\sigma^2}2 \Delta \rho &= 0 \\
	\partial_t \phi + \frac{1}2 |\nabla \phi|^2  + \frac{\sigma^2}2 \Delta \phi &= \frac{\alpha}2.
\end{align}
\end{subequations}

\noindent One observes the automatic emergence of a transient viscous Eikonal equation, which is to be solved backward in time with terminal condition. The connection to models using the distance function as a potential is quite apparent for a time interval $[0,S]$ with $S << T$ and $\sigma = 0$. Noticing that the Hamilton-Jacobi equation is solved backwards in time and that the backward time is large for $t \leq S$, we see that  the solution $\phi$ is mainly determined by the large-time asymptotics (cf. \cite{Ishii2008}) solving
\begin{equation}
	|\nabla \tilde \phi|^2 = c,
\end{equation}
for some constant $c$. Hence the potential becomes approximately a multiple of the distance function.


%

\subsection{Mean Field Games and Crowding}
We now turn our attention to a more general mean field game, respectively its optimal control formulation. We generalize the terms depending on the density for reasons to be explained in detail below to obtain
\begin{subequations}\label{e:optcon}
\begin{equation} \label{e:opt}
  I_T(\rho,v) = 	\frac{1}2 \int_0^T \int_\Omega F(\rho) |v(x,t)|^2 dx~dt + \frac{1}2 \int_0^T \int_\Omega E(\rho)~dx~dt ,
\end{equation}
subject to
\begin{equation} \label{e:constraint}
	\partial_t \rho + \nabla \cdot (G(\rho) v) = \frac{\sigma^2}2 \Delta \rho,
\end{equation}
\end{subequations}
and a given initial value $\rho(x,0) = \rho_0(x)$. The motivation for those terms is as follows:
\begin{itemize}

\item The function $G = G(\rho)$ corresponds to nonlinear mobilities, which are frequently used in crowding models, e.g. $G(\rho) =\rho(\rho_{\max}-\rho)$ derived from microscopic lattice exclusion processes (cf. \cite{BMP2011} for a pedestrian case). Nonlinear mobilities have been derived in different settings and used also in different applications of crowded motion, e.g. ion channels \cite{BDPS2010,BSW2012} or cell biology \cite{SHL2009, PH2002, DMB2012}. A typical form of $G$ is close to linear and increasing for small densities, while saturating and possibly decreasing to zero for larger densities.

\item The function $F=F(\rho)$ corresponds to transport costs created by large densities. In particular one may think of $F$ tending to infinity as $\rho$ tends towards a maximal density. As we shall see below however, changes in $F$ can also be related equivalently to changes in the mobility.

\item A nonlinear function $E=E(\rho)$ may model active avoidance of jams in the exit strategy, in particular by penalizing large density regions.
\end{itemize}

\noindent The optimality conditions of \eqref{e:optcon} can be (again formaly) derived via the Lagrange functional
\begin{equation}
	L_T(\rho,v,\phi) = I_T(\rho,v) + \int_0^T \int_\Omega (
	\partial_t \rho + \nabla \cdot (G(\rho)v) - \frac{\sigma^2}2 \Delta \rho) \phi~dx~dt.
\end{equation}
For the optimal solution we have
\begin{equation} \label{e:optv}
	0 = \partial_v L_T(\rho,v,\phi) = F(\rho) v - G(\rho) \nabla \phi,
\end{equation}
and
\begin{equation}
	0 = \partial_\rho L_T(\rho,v,\phi) = \frac{1}2 F'(\rho) |v|^2 + \frac{1}2 E'(\rho) - \partial_t \phi - G'(\rho) v \cdot \nabla \phi - \frac{\sigma^2}2 \Delta \phi,
\end{equation}
with terminal condition $\phi(x,T) = 0$. Inserting $v=\frac{G}F \nabla \phi$ we obtain the optimality system
\begin{subequations}\label{e:optsys}
\begin{align}
	\partial_t \rho + \nabla \cdot ( \frac{G(\rho)^2}{F(\rho)} \nabla \phi ) - \frac{\sigma^2}2 \Delta \rho &= 0 \\
	\partial_t \phi + \frac{1}2 (2 \frac{GG'}{F} - \frac{F'G^2}{F^2}) |\nabla \phi|^2  + \frac{\sigma^2}2 \Delta \phi &= \frac{1}2 E'(\rho),
\end{align}
\end{subequations}
again a mean field game.

\subsection{Momentum Formulation}

In order to reduce ambiguities in the definition of a velocity and also to simplify the constraint PDE, we use a momentum or flux-based formulation in the following, cf. \cite{ALS2006,BB2000}.
The flux density (or momentum) is given by $j = G(\rho) v$, hence the functional do be minimized in \eqref{e:optcon} becomes
\begin{subequations}\label{e:optconj}
\begin{equation} \label{e:optj}
  \widetilde{I}_T(\rho,j) = 	\frac{1}2 \int_0^T \int_\Omega \frac{|j|^2}{H(\rho)} dx~dt + \frac{1}2 \int_0^T \int_\Omega E(\rho)~dx~dt ,
\end{equation}
with
$H:=\frac{G^2}F$, subject to
\begin{equation} \label{e:constraintj}
	\partial_t \rho + \nabla \cdot j = \frac{\sigma^2}2 \Delta \rho.
\end{equation}
\end{subequations}
We see the redundancy of $F$ and $G$ in the above formulation, effectively only
$H:=\frac{G^2}F$ determines different cases. We mention that in order to obtain a rigorous formulation, we replace $\frac{|j|^2}{H(\rho)}$ in \eqref{e:optj} by
\begin{equation}\label{def_K}
	K(j,\rho):= \left\{ \begin{array}{ll} \frac{j^2}{H(\rho)} & \text{if } H(\rho) \neq 0 \\
	0 & \text{if } j=0, H(\rho) = 0 \\
	+ \infty & \text{if } j\neq 0 , H(\rho) = 0. \end{array}\right.
\end{equation}
The Lagrange functional for this problem is given by
\begin{equation}
	\widetilde{L}_T(\rho,v,\phi) = \widetilde{I}_T(\rho,j) + \int_0^T \int_\Omega (
	\partial_t \rho + \nabla \cdot j - \frac{\sigma^2}2 \Delta \rho) \phi~dx~dt,
\end{equation}
and the optimality conditions are
\begin{subequations}\label{e:optsysj}
\begin{align}
	\partial_t \rho + \nabla \cdot ( H(\rho) \nabla \phi ) - \frac{\sigma^2}2 \Delta \rho &= 0 \\
	\partial_t \phi + \frac{H'(\rho)}2  |\nabla \phi|^2  + \frac{\sigma^2}2 \Delta \phi &= \frac{1}2 E'(\rho).
\end{align}
\end{subequations}
We recall that the rigorous formulation of the problem in terms of the boundary conditions will be performed in subsection \ref{s:bc}.

\subsection{Relation to the Hughes Model}\label{s:hughes}
For vanishing viscosity $\sigma = 0$ the optimality system \eqref{e:optsys} respectively \eqref{e:optsysj} has a similar structure as Hughes model for pedestrian flow, see \cite{H2002}, which reads as
\begin{subequations}\label{e:hughes}
\begin{align}
\partial_t \rho - \Div(\rho f^2(\rho) \nabla \phi) &= 0\\
\lvert \nabla \phi \rvert &= \frac{1}{f(\rho)}. \label{e:hugheseikonal}
\end{align}
\end{subequations}
Hughes proposed that pedestrians seek the fastest way to the exit, but at the same time try to avoid congested areas. Let $\rho_{\max}$ denote the maximum density, then
the function $f(\rho)$ models how pedestrians change their direction and velocity due to the overall density. A common choice for example is $f(\rho) = \rho_{\max} - \rho$.\\
\noindent To obtain the connection with the Hughes model we choose $H(\rho) = \rho f(\rho)^2$, $E(\rho)=\alpha \rho$, then \eqref{e:optsysj}
reads as 
\begin{subequations}\label{e:timedepmodhughes}
\begin{align}
	\partial_t \rho + \nabla \cdot ( \rho~f(\rho)^2 \nabla \phi ) &= 0 \\
	\partial_t \phi + \frac{f(\rho)}2 (f(\rho) + 2 \rho f'(\rho)) |\nabla \phi|^2  &= \frac{\alpha}2.
\end{align}
\end{subequations}
For large $T$ we expect equilibration of $\phi$ backward in time. If we further consider $\sigma = 0$, then for time $t$ of order one the limiting model (very formal) becomes
\begin{subequations}\label{e:modhughes}
\begin{align}
	\partial_t \rho + \nabla \cdot ( \rho~f(\rho)^2 \nabla \phi ) &= 0 \\
	 (f(\rho) + 2 \rho f'(\rho)) |\nabla \phi|^2   &= \frac{c}{f(\rho)},
\end{align}
\end{subequations}
which is almost equivalent to Hughes model \eqref{e:hughes}, if we set $c=1$. Note that the difference is in the term
$2f'(\rho)$, which is however severe. For $f(\rho)=\rho_{\max}-\rho$ we have
$$ f(\rho) + 2 \rho f'(\rho) = \rho_{\max} - 3\rho, $$
thus for small densities the behavior is similar, but the singular point is $\rho=\frac{\rho_{\max}}3$. A (so far partial) attempt to a rigorous mathematical theory for the original Hughes model \eqref{e:hughes} has been provided in \cite{ADF2012}. Previous results \cite{DFMPW2011,CGLM2012} considered smoothed version of that model. El-Khatib et al. first considered a variant with a cost in the eikonal equation not related to $f(\rho)$, cf. \cite{goatin}, and finally Gomes and Saude provided
a priori estimates for \eqref{e:timedepmodhughes} in \cite{GS2013}.

\subsection{Boundary Conditions}\label{s:bc}

 We finally turn to the modeling of the boundary conditions, which we have neglected in the computations above.
To do so we consider \eqref{e:optconj} on a bounded domain $\Omega \subset \mathbb{R}^d$, $d=1,2$, corresponding to a room with one or several exits. We assume that the boundary $\partial \Omega$ is split into a Neumann part $\Gamma_N \subseteq \partial \Omega$, modeling walls, and the exits $\Gamma_E \subseteq \partial \Omega$, $\partial \Omega = \Gamma_N \cup \Gamma_E$ and $\Gamma_N \cap \Gamma_E = \emptyset$. We now formulate natural boundary conditions on the density, which will lead to adjoint boundary conditions for $\phi$.

\noindent On the Neumann boundary $\Gamma_N$ we clearly have no outflux, hence naturally
\begin{align*}
(-\frac{\sigma^2}{2} \nabla \rho + j) \cdot n  = 0,
\end{align*}
where $n$ denotes the unit outer normal vector.
The exit part is more difficult. Here the outflux depends on how fast people can leave the room.
If we denote the rate of passing the exit by $\beta$, then we have the outflow proportional to $\beta \rho$. Hence, we arrive at the Robin boundary condition
\begin{align}\label{e:robinbc}
(-\frac{\sigma^2}{2} \nabla \rho + j) \cdot n = \beta \rho .
\end{align}

\noindent Then boundary conditions for the adjoint variable $\phi$ can be calculated again from the optimality conditions. If we define the Lagrangian in this case we obtain:
\begin{align}
L_T(\rho,j,&\phi) = I_T(\rho,j) + \int_0^T \int_\Omega (\partial_t \rho + \nabla \cdot j - \frac{\sigma^2}2 \Delta \rho) \phi~dx~dt\nonumber\\
&= I_T(\rho,j) + \int_0^T \int_\Omega \rho(-\partial_t \phi - \frac{\sigma^2}2 \Delta \phi - j \cdot \nabla \phi) ~dx~dt\nonumber\\
&\phantom{= J_T(\rho,j)} + \int_0^T \int_{\partial \Gamma_E}(-\frac{\sigma^2}2 \nabla \rho \cdot n \phi + \frac{\sigma^2}2 \rho \nabla \phi \cdot n + j \cdot n \phi )~ ds~dt \nonumber\\
&\phantom{= J_T(\rho,j)} + \int_0^T \int_{\partial \Gamma_N} \frac{\sigma^2}2 \rho \nabla \phi \cdot n~ ds~dt \nonumber
\end{align}
\begin{align}
&= I_T(\rho,j) + \int_0^T \int_\Omega \rho(-\partial_t \phi - \frac{\sigma^2}2 \Delta \phi - j \cdot \nabla \phi) ~dx~dt\nonumber\\
&\phantom{= J_T(\rho,j)} + \int_0^T \int_{\partial \Gamma_E}(\beta \rho \phi + \frac{\sigma^2}2 \rho \nabla \phi \cdot n  ) ds~dt + \int_0^T \int_{\partial \Gamma_N} \frac{\sigma^2}2 \rho \nabla \phi \cdot n~ ds~dt.\label{eq:functional}
\end{align}
In this form we see that the optimality condition with respect to $\rho$, i.e. $\partial_\rho L_T = 0$, yields the following boundary conditions for the
adjoint variable $\phi$ in addition to the adjoint PDE:
\begin{align}
&\frac{\sigma^2}2 \nabla \phi \cdot n + \beta \phi = 0 \text{ on }  \Gamma_E \quad \text{ and } \quad \frac{\sigma^2}2 \nabla \phi \cdot n = 0 \text{ on }\Gamma_N. \label{e:adjointbc}
\end{align}

\noindent One observes that our model provides meaningful and easily interpretative boundary conditions for the density $\rho$ as well as for the adjoint variable $\phi$. A homogeneous Dirichlet boundary condition for $\phi$ on doors arises only in the asymptotic limit $\beta \rightarrow \infty$. In this case one also needs to specify a homogeneous Dirichlet boundary condition for $\rho$ for consistence. This is in contrast to previous models, which frequently used Dirichlet boundary conditions for $\phi$, but arbitrary Dirichlet values for $\rho$ or an outflux boundary condition as above with finite $\beta$.

\noindent In the case of Hughes model \eqref{e:hughes} commonly used boundary conditions are chosen as follows.
The potential $\phi$ is assumed to satisfy homogeneous Dirichlet boundary conditions, i.e. $\phi = 0$, while the density $\rho$ either
satisfies a Dirichlet boundary conditions of type $\rho = \rho_D$ or an outflux condition as in \eqref{e:robinbc}.
There are two possibilities to compare the behavior of solutions of \eqref{e:hughes} and \eqref{e:timedepmodhughes}:
\begin{itemize}
\item Either by prescribing a density  at the exit, i.e. $\rho = \rho_D$ and homogeneous Dirichlet boundary conditions for the adjoint, i.e. $\phi = 0$.  This, however, is not realistic in practical applications.
\item Or by choosing outflux boundary conditions for the density $\rho$, as in \eqref{e:robinbc} with a large proportionality constant $\beta$, and the corresponding no-flux boundary conditions \eqref{e:adjointbc} in the
for $\phi$. The latter choice for the potential $\phi$ is not compatible with \eqref{e:hughes}, but approximates the homogeneous Dirichlet boundary condition (typically chosen in Hughes model) for $\beta$ large.
\end{itemize}
Note that no flux boundary conditions for the density $\rho$, i.e. $\beta = 0$ in \eqref{e:robinbc} would result in a homogeneous Neumann boundary conditions \eqref{e:adjointbc} for $\phi$. Such boundary conditions are not compatible
with the Eikonal equation \eqref{e:hugheseikonal} in Hughes model and contradict the initial modeling assumptions.

\section{Analysis of the Optimal Control Model}\label{s:analysis}

\subsection{Existence of Minimizers}
In this section we discuss the existence and uniqueness of minimizers for the optimization problem \eqref{e:optcon}, respectively the
reformulation \eqref{e:optconj}. In order to avoid any ambiguity with the formulations we can simply set $$F=G=H,$$ which will remain as a standing assumption throughout the rest of this section.
Let $\Omega$ be a bounded domain in $\mathbb{R}^d,~d=1,2,$. We make the following basic assumption on $F$ and $E$:
\begin{enumerate}[label=(A\arabic{enumi})]
\item \label{a:reg} $F = F(\rho) \in C^1(\R)$, $F$ bounded, $E = E(\rho) \in C^1(\R)$ and $F(\rho) \geq 0$, $E(\rho) \geq 0$ for $\rho \in \Upsilon$.
\end{enumerate}
Existence of minimizers is guaranteed if
\begin{enumerate}[label=(A\arabic{enumi}), start=2]
\item \label{a:conv} $E = E(\rho)$ is convex.
\end{enumerate}
As recalled in subsection \ref{s:hughes}, models with maximal density are of great importance in the applied setting, therefore we shall consider the special class of mobilities $F$ which yields a density uniformly bounded by a maximal density. Let us fix the maximal density to be $\rho_{\max}>0$. Let us denote by $\Upsilon = [0,\rho_{\max}]$. Typically $\rho_{\max}=1$. In order to force the minimizers of the problem to satisfy $\rho\in \Upsilon$, we extend $F$ to $F(\rho)=0$ on $\rho \in \R\setminus\Upsilon$. This defines the next assumption on $F$, i.e.
\begin{enumerate}[label=(A\arabic{enumi}), start=3]
\item \label{a:crowd} $F(0)>0$ if $\rho\in \Upsilon$ and $F=0$ otherwise.
\end{enumerate}
Note that assumption \ref{a:crowd} is satisfied in particular by $F(\rho)=\rho f(\rho)$ in the Hughes model in subsection \ref{s:hughes}. To show uniqueness we need another additional assumption for $F$, namely
\begin{enumerate}[label=(A\arabic{enumi}), start=4]
\item \label{a:conc}$F = F(\rho)$ is concave.
\end{enumerate}
We consider the optimization problem on the set $V \times Q$, i.e. $I_T(\rho,v):  V \times Q \rightarrow \R$, where $V$ and $Q$ are defined as follows
\begin{align}
V &= L^2(0,T; H^1(\Omega))\cap H^1(0,T;H^{-1}(\Omega))\text{ and } Q = L^2(\Omega\times(0,T)).
\end{align}
Hence the optimization problem \eqref{e:optcon} reads as
\begin{align}\label{e:abstractmin}
  \min_{(\rho,v) \in V \times Q} I_T(\rho,v) \text{ such that } \partial_t \rho = \frac{\sigma^2}{2} \Delta \rho - \Div(F(\rho) v) ,
\end{align}
respectively in momentum formulation
\begin{align}\label{e:abstractminj}
  \min_{(\rho,j) \in V \times Q} \widetilde{I}_T(\rho,j) \text{ such that } \partial_t \rho = \frac{\sigma^2}{2} \Delta \rho - \Div(j) .
\end{align}

\noindent In order to have the differential constraint well defined in the prescribed functional setting, and to incorporate the Robin boundary conditions in the statement of the problem, we shall provide a more rigorous definition of the minimization problem below.

\noindent For the existence proof in the case of non-concave $F$, we introduce another formulation based on the rather nonphysical variable $$w=\sqrt{F(\rho)} v,$$ which has been used already in the case of linear $F$ in \cite{B2010}. Then the functional $I_T$ can be re-written as
\begin{equation}\label{e:functional_w}
    J(\rho,w)= \frac{1}2 \int_0^T\int_\Omega (|w|^2 +E(\rho))~dx~dt,
\end{equation}
and the optimization problem formally becomes
\begin{align}\label{e:abstractminw}
  \min_{(\rho,j) \in V \times Q} J(\rho,w)  \text{ such that } \partial_t \rho = \frac{\sigma^2}{2} \Delta \rho - \Div(\sqrt{F(\rho)} w).
\end{align}

\noindent In order to make the relation among the problems \eqref{e:abstractmin}, \eqref{e:abstractminj}, and \eqref{e:abstractminw} rigorous, we need to extend the domain of the velocity $v$ to
\begin{equation}
\tilde Q_\rho:= \{ v \text{ measurable} ~|~\sqrt{F(\rho)} v \in Q \}.
\end{equation}
Moreover, for given $\rho$ we define an extension mapping $w \in Q$ to $v \in \tilde Q_\rho$ via
\begin{equation}
R_\rho(w)(x):= \left\{ \begin{array}{ll} \frac{w(x)}{\sqrt{F(\rho(x))}} & \text{if } F(\rho(x)) \neq 0 \\ 0 & \text{else.}  \end{array} \right.
\end{equation}

\noindent With this notation, we easily get (we omit the details)
\begin{lemma}
Let \ref{a:reg} be satisfied and let $\sigma > 0$. Then the following relations hold:
\begin{itemize}
\item If $(\rho,v) \in V \times \tilde Q_\rho$ solves \eqref{e:abstractmin}, then $(\rho,\sqrt{F(\rho)}v) \in V \times Q$ solves \eqref{e:abstractminw}. If $(\rho,w) \in V \times Q$ solves \eqref{e:abstractminw}, then $(\rho,R_\rho(w)) \in V \times \tilde Q_\rho$ solves \eqref{e:abstractmin}.

\item If $(\rho,w) \in V \times \tilde Q_\rho$ solves \eqref{e:abstractminw}, then $(\rho,\sqrt{F(\rho)}w) \in V \times Q$ solves \eqref{e:abstractminj}. If $(\rho,j) \in V \times Q$ solves \eqref{e:abstractminj}, then $(\rho,R_\rho(j)) \in V \times \tilde Q_\rho$ solves \eqref{e:abstractminw}.
\end{itemize}
\end{lemma}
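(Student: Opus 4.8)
The plan is to prove each of the two equivalences by showing that the nonlinear changes of unknown $w=\sqrt{F(\rho)}\,v$ and $j=\sqrt{F(\rho)}\,w$ set up a correspondence between admissible pairs under which the constraint is \emph{literally the same PDE} and the three cost functionals agree, up to a controllable defect supported on the set $\{F(\rho)=0\}$. Since none of these maps alters the state $\rho$, membership of $\rho$ in $V$ is automatic and the whole argument reduces to bookkeeping for the velocity/momentum variable in $Q$ and $\tilde Q_\rho$.

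First I would verify the space memberships and the constraint. If $v\in\tilde Q_\rho$ then $\sqrt{F(\rho)}\,v\in Q$ by the very definition of $\tilde Q_\rho$; conversely, for $w\in Q$ one has $\sqrt{F(\rho)}\,R_\rho(w)=w\,\chi_{\{F(\rho)\neq0\}}\in Q$, so $R_\rho(w)\in\tilde Q_\rho$, and since $F$ is bounded by \ref{a:reg} one also gets $\sqrt{F(\rho)}\,w\in Q$; the analogous statements for $j$ follow the same way, using that finiteness of $\widetilde I_T$ forces $j=0$ a.e. where $F(\rho)=0$ through the definition \eqref{def_K} of $K$. For the constraints, the key is the pointwise algebraic identity $F(\rho)\,v=\sqrt{F(\rho)}\,w=j$, valid everywhere (both sides vanishing where $F(\rho)=0$), so the divergence term, and hence the state equation \eqref{e:constraintj} — together with the Robin condition \eqref{e:robinbc}, which only involves $\rho$ and $j$ — is left unchanged by the substitutions. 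Thus admissibility is transported along all the maps in both directions.

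Next I would compare the functionals. In the directions $v\mapsto w=\sqrt{F(\rho)}\,v$, $w\mapsto j=\sqrt{F(\rho)}\,w$, and (on finite-cost pairs) $j\mapsto R_\rho(j)$, the pointwise identities $|\sqrt{F(\rho)}\,v|^2=F(\rho)\,|v|^2$ and $|j|^2/F(\rho)=|R_\rho(j)|^2$ on $\{F(\rho)\neq0\}$ — with the complementary set contributing nothing, respectively being excluded by $K$ — yield \emph{exact} equality $I_T=J$, $J=\widetilde I_T$ on matched pairs. The single non-trivial point is the remaining direction $w\mapsto v=R_\rho(w)$ in the equivalence of \eqref{e:abstractmin} and \eqref{e:abstractminw}: there one only gets $I_T(\rho,R_\rho(w))\le J(\rho,w)$, with possible strict inequality when $w\neq0$ on a positive-measure subset of $\{F(\rho)=0\}$. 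This defect is harmless: combined with the exact-equality direction it shows that the infima of the three problems coincide, and consequently at \emph{any} minimizer of \eqref{e:abstractminw} the inequality is forced to be an equality — equivalently, $w$ vanishes a.e. on $\{F(\rho)=0\}$, for otherwise truncating $w$ there would strictly lower $J$ without touching the constraint.

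Finally I would assemble the statement: since admissibility is preserved and the infima of \eqref{e:abstractmin}, \eqref{e:abstractminj} and \eqref{e:abstractminw} all agree, a minimizer of any one of them is carried by the appropriate substitution to an admissible competitor of another that realizes the common infimum, hence to a minimizer; running this along both maps and chaining through the intermediate $w$-formulation gives the four implications of the lemma. I expect the only delicate step to be exactly the measure-theoretic treatment of $\{F(\rho)=0\}$ described above — certifying that $w\mapsto v$ and $j\mapsto w$ are genuinely the inverses of $v\mapsto w$, $w\mapsto j$ \emph{at the level of minimizers}, so that the costs match on the nose rather than only up to an inequality; everything else is a direct substitution.
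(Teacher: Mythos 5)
Your argument is correct and is exactly the direct-substitution bookkeeping the paper has in mind when it writes ``we easily get (we omit the details)'': admissibility transfers because $F(\rho)v=\sqrt{F(\rho)}\,w=j$ holds pointwise (all three vanishing on $\{F(\rho)=0\}$), the costs can only decrease along each map, hence the three infima coincide and equality is forced at minimizers. The only nitpick is that the defect you isolate for $w\mapsto R_\rho(w)$ also occurs for $w\mapsto\sqrt{F(\rho)}\,w$ (on $\{F(\rho)=0\}$ one has $K=0$ while $|w|^2$ may be positive, so there one only gets $\widetilde{I}_T\le J$ rather than the ``exact equality'' you claim) --- but your infimum-matching mechanism already absorbs this, so the conclusion stands.
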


\noindent As a consequence of the above Lemma, we see that if one of the models has a minimizer with $F(\rho)$ uniformly bounded away from zero, it is in $V \times Q$ and minimizes all three models.
We are now ready to state the minimization problem \eqref{e:abstractmin} and its reformulation \eqref{e:abstractminw} rigorously, in a way to incorporate the Robin boundary conditions, and to take into account the proper functional setting.

\begin{definition}[Minimisation problem, 1st formulation]\label{def1}
Let $\rho_0\in L^2(\Omega)$. A pair $(\rho,v)\in V\times \tilde Q_\rho$ is a \emph{weak solution} to the minimization problem \eqref{e:abstractmin} with initial condition $\rho_0$, if $\rho(0)=\rho_0$ and
\begin{equation}\label{eq:weak_eqn}
\langle \partial_t \rho, \psi \rangle_{H^{-1},H^1} +  \int_\Omega( \frac{\sigma^2}{2} \nabla \rho- F(\rho) v) \cdot \nabla \psi ~dx  = - \int_{\Gamma_E} \beta \rho \psi ~ds,
\end{equation}
for all $\psi \in H^1(\Omega)$, and if
\begin{equation*}
    I_T(\rho,v)=\min\left\{ I_T( \bar\rho, \bar v),\ :\ (\bar \rho,\bar v)\in V\times Q,\ \ (\bar \rho,\bar v)\ \hbox{ satisfy }\ \ \eqref{eq:weak_eqn}\right\}.
\end{equation*}
\end{definition}

\noindent A similar re-formulation of \eqref{e:abstractminj} in terms of the momentum variable $j$ replacing $v$ can be also deduced.

\begin{definition}[Minimisation problem, 2nd formulation]\label{def2}
Let $\rho_0\in L^2(\Omega)$. A pair $(\rho,j)\in V\times \tilde Q_\rho$ is a \emph{weak solution} to the minimization problem \eqref{e:abstractminj} with initial condition $\rho_0$, if $\rho(0)=\rho_0$ and
\begin{equation}\label{eq:weak_eqn2}
\langle \partial_t \rho, \psi \rangle_{H^{-1},H^1} +  \int_\Omega( \frac{\sigma^2}{2} \nabla \rho- j) \cdot \nabla \psi ~dx  = - \int_{\Gamma_E} \beta \rho \psi ~ds,
\end{equation}
for all $\psi \in H^1(\Omega)$, and if
\begin{equation*}
    \widetilde{I}_T(\rho,j)=\min\left\{ \widetilde{I}_T( \bar\rho, \bar j),\ :\ (\bar \rho,\bar j)\in V\times Q,\ \ (\bar \rho,\bar j)\ \hbox{ satisfy }\ \ \eqref{eq:weak_eqn2}\right\},
\end{equation*}
where
\begin{equation*}
    \widetilde{I}_T(\rho,j) = \frac{1}2 \int_0^T \int_\Omega K(j,\rho) dx~dt + \frac{1}2 \int_0^T \int_\Omega E(\rho)~dx~dt,
\end{equation*}
and $K$ is defined as in \eqref{def_K}.
\end{definition}

\noindent Similarly, on the variables setting $(\rho,w)$ we get:

\begin{definition}[Minimisation problem, 3rd formulation]\label{def3}
Let $\rho_0\in L^2(\Omega)$. A pair $(\rho,w)\in V\times Q$ is a \emph{weak solution} to the minimization problem \eqref{e:abstractminw} with initial condition $\rho_0$, if $\rho(0)=\rho_0$ and
\begin{equation}\label{eq:weak_eqn3}
\langle \partial_t \rho, \psi \rangle_{H^{-1},H^1} +  \int_\Omega( \frac{\sigma^2}{2} \nabla \rho- \sqrt{F(\rho)} w) \cdot \nabla \psi ~dx  = - \int_{\Gamma_E} \beta \rho \psi ~ds,
\end{equation}
for all $\psi \in H^1(\Omega)$, and if
\begin{equation*}
    J_T(\rho,w)=\min\left\{ J_T( \rho, w),\ :\ (\bar \rho,\bar w)\in V\times Q,\ \ (\bar \rho,\bar w)\ \hbox{ satisfy }\ \ \eqref{eq:weak_eqn3}\right\}.
\end{equation*}
\end{definition}

We now prove our main existence result in the formulation using $w$, which allows also for non-concave $F$, and relies on a positive viscosity $\sigma$. First, we provide an a-priori estimate:

\begin{lemma} \label{rhowlemma}
Let $\rho_0\in L^2(\Omega)$. Let \ref{a:reg} and \ref{a:conv} be satisfied and let $\sigma > 0$, $\beta \geq 0$.
Let $w \in Q$ and let $\rho \in V$ be a weak solution of
\begin{equation}
\langle \partial_t \rho, \psi \rangle_{H^{-1},H^1} +  \int_\Omega( \frac{\sigma^2}{2} \nabla \rho- \sqrt{F(\rho)} w) \cdot \nabla \psi ~dx  = - \int_{\Gamma_E} \beta \rho \psi ~ds,
\end{equation}
for all $\psi \in H^1(\Omega)$. Then there exist constants $C_1, C_2 > 0$ depending on $F$, $\sigma$, $\Omega$ and $T$ only, such that
\begin{equation}
\Vert \rho \Vert_V \leq C_1 \Vert w \Vert_Q + C_2.
\end{equation}
\end{lemma}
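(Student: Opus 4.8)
\noindent The plan is to treat the PDE for $\rho$ as a linear viscous advection equation in which the divergence-form term $\Div(\sqrt{F(\rho)}w)$ is regarded as a \emph{given} datum lying in $Q=L^2(\Omega\times(0,T))$, and to run the classical parabolic energy estimate. First I would use $\psi=\rho(\cdot,t)$ as test function in \eqref{eq:weak_eqn3}, which is admissible since $\rho(t)\in H^1(\Omega)$ for a.e.\ $t$. Because $\rho\in V=L^2(0,T;H^1(\Omega))\cap H^1(0,T;H^{-1}(\Omega))$, the Gelfand-triple identity $\langle\partial_t\rho,\rho\rangle_{H^{-1},H^1}=\frac12\frac{d}{dt}\norm{\rho(t)}_{L^2(\Omega)}^2$ holds for a.e.\ $t$, so I obtain
\begin{equation*}
\frac12\frac{d}{dt}\norm{\rho(t)}_{L^2(\Omega)}^2+\frac{\sigma^2}{2}\norm{\nabla\rho(t)}_{L^2(\Omega)}^2+\beta\int_{\Gamma_E}\rho^2\,ds=\int_\Omega \sqrt{F(\rho)}\,w\cdot\nabla\rho\,dx.
\end{equation*}
Here assumption \ref{a:reg} enters twice: $F\geq0$ (together with the extension by zero in \ref{a:crowd}) makes $\sqrt{F(\rho)}$ meaningful, and boundedness of $F$ gives $\sqrt{F(\rho)}\leq M:=\norm{F}_\infty^{1/2}$ pointwise; the $\Gamma_E$-term has a favourable sign since $\beta\geq0$ and is simply dropped.

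\noindent Next I would absorb the right-hand side by Cauchy--Schwarz and Young's inequality, $\int_\Omega\sqrt{F(\rho)}w\cdot\nabla\rho\,dx\leq M\norm{w(t)}_{L^2(\Omega)}\norm{\nabla\rho(t)}_{L^2(\Omega)}\leq\frac{\sigma^2}{4}\norm{\nabla\rho(t)}_{L^2(\Omega)}^2+\frac{M^2}{\sigma^2}\norm{w(t)}_{L^2(\Omega)}^2$, which is the step that crucially uses $\sigma>0$. This leaves $\frac{d}{dt}\norm{\rho(t)}_{L^2(\Omega)}^2+\frac{\sigma^2}{2}\norm{\nabla\rho(t)}_{L^2(\Omega)}^2\leq\frac{2M^2}{\sigma^2}\norm{w(t)}_{L^2(\Omega)}^2$; integrating in time on $[0,T]$ (no Grönwall is needed, as there is no zeroth-order coefficient) yields
\begin{equation*}
\sup_{t\in[0,T]}\norm{\rho(t)}_{L^2(\Omega)}^2+\frac{\sigma^2}{2}\int_0^T\norm{\nabla\rho}_{L^2(\Omega)}^2\,dt\leq\norm{\rho_0}_{L^2(\Omega)}^2+\frac{2M^2}{\sigma^2}\norm{w}_Q^2,
\end{equation*}
and bounding $\int_0^T\norm{\rho}_{L^2(\Omega)}^2\,dt\leq T\sup_t\norm{\rho(t)}_{L^2(\Omega)}^2$ then controls $\norm{\rho}_{L^2(0,T;H^1(\Omega))}$ by the right-hand side.

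\noindent It then remains to estimate $\partial_t\rho$ in $L^2(0,T;H^{-1}(\Omega))$. From \eqref{eq:weak_eqn3}, for $\psi\in H^1(\Omega)$ one has $\abs{\langle\partial_t\rho(t),\psi\rangle}\leq\frac{\sigma^2}{2}\norm{\nabla\rho(t)}_{L^2(\Omega)}\norm{\nabla\psi}_{L^2(\Omega)}+M\norm{w(t)}_{L^2(\Omega)}\norm{\nabla\psi}_{L^2(\Omega)}+\beta\norm{\rho(t)}_{L^2(\Gamma_E)}\norm{\psi}_{L^2(\Gamma_E)}$, and the trace inequality on $\Omega$ (valid for $d=1,2$) bounds both $\Gamma_E$-norms by the corresponding $H^1(\Omega)$-norms with a constant depending only on $\Omega$. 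Hence $\norm{\partial_t\rho(t)}_{H^{-1}(\Omega)}\leq C\bigl(\norm{\rho(t)}_{H^1(\Omega)}+\norm{w(t)}_{L^2(\Omega)}\bigr)$; squaring, integrating in $t$, and inserting the bound from the previous step gives $\norm{\partial_t\rho}_{L^2(0,T;H^{-1})}^2\leq C\bigl(\norm{\rho_0}_{L^2(\Omega)}^2+\norm{w}_Q^2\bigr)$. Adding the two contributions, $\norm{\rho}_V^2\leq C\bigl(\norm{\rho_0}_{L^2(\Omega)}^2+\norm{w}_Q^2\bigr)$, whence the claimed inequality follows with $C_1=\sqrt{C}$ and $C_2=\sqrt{C}\,\norm{\rho_0}_{L^2(\Omega)}$ (so $C_2$ in fact also carries a dependence on the data $\rho_0$), using $\sqrt{a+b}\leq\sqrt{a}+\sqrt{b}$.

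\noindent The only points that are not completely routine are the justification of the integration by parts in time --- i.e.\ the identity $\langle\partial_t\rho,\rho\rangle=\frac12\frac{d}{dt}\norm{\rho}_{L^2(\Omega)}^2$, which is precisely where membership of $\rho$ in $V$ is used, via the Lions--Magenes lemma for the triple $H^1(\Omega)\embedded L^2(\Omega)\embedded H^{-1}(\Omega)$ --- and the handling of the Robin term on $\Gamma_E$, which requires the trace theorem but is otherwise harmless since its sign is favourable in the energy identity and it depends only linearly on $\rho$ when estimating $\partial_t\rho$. Everything else reduces to Cauchy--Schwarz, Young, and a single time integration, so no genuine obstacle is expected.
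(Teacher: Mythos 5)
Your proposal is correct and follows essentially the same route as the paper: test with $\psi=\rho(t)$, drop the signed Robin term, absorb the $\sqrt{F(\rho)}\,w\cdot\nabla\rho$ term via Young's inequality using the boundedness of $F$ and $\sigma>0$, integrate in time, and then bound $\partial_t\rho$ in $H^{-1}$ by duality with a trace inequality for the boundary term. Your added remark that $C_2$ necessarily carries a dependence on $\|\rho_0\|_{L^2(\Omega)}$ is a fair (and correct) refinement of the constant-dependence stated in the lemma.
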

\begin{proof}
We use the test function $\psi=\rho(t)$ and obtain
\begin{align*}
  \frac{1}{2} \frac{d}{dt}\int_\Omega \rho^2 dx = -\frac{\sigma^2}{2} \int\lvert \nabla \rho\rvert^2 dx + \int_{\Omega} \sqrt{F(\rho)} w \nabla \rho~dx - \int_{\Gamma_E} \beta \rho^2~ds.
\end{align*}
Since $F = F(\rho)$ is bounded, the second to last term is non positive, and $w(t) \in L^2(\Omega)$. This yields after integration in time
\begin{align*}
\frac 1 2 \lVert \rho (t) \rVert_{L^2}^2 +  \frac{\sigma^2}{4} \int_0^t \lVert \rho \rVert_{H^1}^2 d\tau \leq c_1 \int_0^t \lVert w \rVert_{L^2}^2~d\tau  + \frac{1}{2} \lVert \rho_0 \rVert_{L^2(\Omega)}^2,
\end{align*}
with a constant $c_1 \in \mathbb{R}^+$ depending only on $\sigma$ and $F$.
For the time derivative we deduce that
\begin{align*}
\lVert \partial_t \rho \rVert_{H^{-1}(\Omega)} = \sup_{\lVert \psi \rVert_{H^1}\leq 1} \lvert \langle \rho_t, \psi \rangle \rvert \leq \sup_{\lVert \psi \rVert_{H^1}\leq 1} \left[c_2 \lVert \rho \rVert_{H^1} \lVert \psi \rVert_{H^1} + c_3 \lVert w \rVert_{L^2} \lVert \nabla \psi \rVert_{L^2}\right],
\end{align*}
where the first term on the right-hand side includes the gradient terms in the domain as well as the boundary term via a trace theorem. Squaring and integrating in time finally yields the desired estimate for
$\lVert \rho \lVert_V$.
\end{proof}

\noindent The following technical Lemma is needed to ensure that assumption \ref{a:crowd} gives $\rho\in \Upsilon$. We provide the details for the sake of completeness, although the result is quite standard.
\begin{lemma}\label{lemma_crowd}
Assume $\rho$ and $w$ are as in the assumption of Lemma \ref{rhowlemma}, and assume further that $F$ satisfies \ref{a:crowd}. Then, $\rho(\cdot,t) \in \Upsilon$ for all $t\in (0,T]$ if $\rho_0(x) \in \Upsilon$.
\end{lemma}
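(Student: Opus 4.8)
The plan is to run the classical Stampacchia truncation argument, separately for the upper level $\rho=\rho_{\max}$ and the lower level $\rho=0$, taking advantage of assumption \ref{a:crowd}: since $F$ (and hence $\sqrt{F}$) vanishes identically outside $\Upsilon$, the nonlinear flux $\sqrt{F(\rho)}\,w$ disappears on the sets $\{\rho>\rho_{\max}\}$ and $\{\rho<0\}$, so that on those sets $\rho$ solves nothing worse than the linear heat equation with a Robin boundary term of favourable sign. The choice of the $w$-formulation is essential here, since $w\in Q$ is a genuine $L^2$ function and the product $\sqrt{F(\rho)}\,w$ is unambiguous.

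First I would record the framework: by the embedding $V\embedded C([0,T];L^2(\Omega))$ the initial trace $\rho(0)=\rho_0$ is meaningful, and $\rho_0\in\Upsilon$ a.e.\ gives $(\rho_0-\rho_{\max})_+=0$ and $(\rho_0)_-=0$ a.e. For the upper bound I set $\eta(t):=(\rho(t)-\rho_{\max})_+$. Since $\rho(t)\in H^1(\Omega)$ for a.e.\ $t$, we have $\eta\in L^2(0,T;H^1(\Omega))$ with $\nabla\eta=\mathbf{1}_{\{\rho>\rho_{\max}\}}\nabla\rho$, so $\eta(t)$ is an admissible test function in the weak formulation of Lemma \ref{rhowlemma}. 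Inserting $\psi=\eta(t)$ and using the chain rule in the Gelfand triple $H^1\embedded L^2\embedded H^{-1}$, which turns $\langle\partial_t\rho,\eta\rangle$ into $\tfrac{1}{2}\tfrac{d}{dt}\norm{\eta(t)}_{L^2(\Omega)}^2$, yields
\begin{equation*}
\frac{1}{2}\frac{d}{dt}\norm{\eta(t)}_{L^2(\Omega)}^2 + \frac{\sigma^2}{2}\norm{\nabla\eta(t)}_{L^2(\Omega)}^2 = \int_\Omega \sqrt{F(\rho)}\,w\cdot\nabla\eta\,dx - \int_{\Gamma_E}\beta\,\rho\,\eta\,ds .
\end{equation*}
The convective term on the right vanishes because its integrand is supported on $\{\rho>\rho_{\max}\}$, where $F(\rho)=0$ by \ref{a:crowd}; the boundary term is $\le 0$ because on $\{\eta>0\}\cap\Gamma_E$ one has $\rho>\rho_{\max}>0$ and $\beta\ge0$. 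Hence $\tfrac{d}{dt}\norm{\eta(t)}_{L^2(\Omega)}^2\le0$, and since $\eta(0)=0$ we conclude $\eta\equiv0$, i.e.\ $\rho(\cdot,t)\le\rho_{\max}$ a.e.\ for every $t\in(0,T]$.

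The lower bound is entirely symmetric: I take $\psi(t)=\min(\rho(t),0)=-(\rho(t))_-$, again in $L^2(0,T;H^1(\Omega))$, and repeat the computation to get $\tfrac{1}{2}\tfrac{d}{dt}\norm{(\rho)_-}_{L^2(\Omega)}^2+\tfrac{\sigma^2}{2}\norm{\nabla(\rho)_-}_{L^2(\Omega)}^2=\int_{\Gamma_E}\beta\,\rho\,(\rho)_-\,ds\le0$, where once more $F(\rho)=0$ on $\{\rho<0\}$ kills the convective contribution and $\rho\le0$ there makes the boundary term nonpositive. Since $(\rho_0)_-=0$, this forces $(\rho)_-\equiv0$, i.e.\ $\rho(\cdot,t)\ge0$. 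Combining the two bounds gives $\rho(\cdot,t)\in\Upsilon$ for all $t\in(0,T]$.

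The only step requiring genuine care — and the main (essentially the only) obstacle — is the chain-rule identity $\langle\partial_t\rho,(\rho-\rho_{\max})_+\rangle=\tfrac{1}{2}\tfrac{d}{dt}\norm{(\rho-\rho_{\max})_+}_{L^2(\Omega)}^2$, together with the absolute continuity of $t\mapsto\norm{\eta(t)}_{L^2(\Omega)}^2$ that it presupposes. This is the classical fact about Lipschitz truncations of functions in $L^2(0,T;H^1)\cap H^1(0,T;H^{-1})$, and it is precisely where the regularity built into the space $V$ is used; everything else in the argument is a sequence of sign checks.
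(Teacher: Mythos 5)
Your proof is correct, but it handles the crucial flux term by a genuinely different mechanism than the paper. The paper tests with a $C^{1,1}$ regularization $\eta_\delta'(\rho-\rho_{\max})$ of the truncation, integrates the convective term by parts onto $\operatorname{div} w$ (which forces it to assume $w\in C^1$ first and close with a density argument in $w$), and then shows that the resulting quantity $\xi_\delta(\rho)=\int_{\rho_{\max}}^{\rho}\sqrt{F(z)}\,\eta_\delta''(z-\rho_{\max})\,dz$ is uniformly bounded and tends to $0$ a.e.\ as $\delta\searrow 0$, using only the continuity of $F$ and $F(\rho_{\max})=0$. You instead test directly with $(\rho-\rho_{\max})_+$ and observe that $\sqrt{F(\rho)}\,w\cdot\nabla(\rho-\rho_{\max})_+$ vanishes pointwise a.e., since $\nabla(\rho-\rho_{\max})_+$ is supported on $\{\rho>\rho_{\max}\}$ where \ref{a:crowd} makes $F$ identically zero. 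This buys you a shorter argument that works for general $w\in Q$ with no smoothing in $w$ and no $\delta$-limit, at the price of (i) leaning on the literal form of \ref{a:crowd} ($F\equiv 0$ outside $\Upsilon$, whereas the paper's $\xi_\delta$ computation would survive under the weaker hypothesis that $F$ merely vanishes continuously at $\rho_{\max}$), and (ii) invoking as a black box the chain rule $\langle\partial_t\rho,(\rho-\rho_{\max})_+\rangle=\tfrac12\tfrac{d}{dt}\Vert(\rho-\rho_{\max})_+\Vert_{L^2}^2$ for $\rho\in L^2(0,T;H^1)\cap H^1(0,T;H^{-1})$ --- which is standard and which the paper's $\eta_\delta$-regularization is, in effect, proving along the way. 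You also treat the lower barrier $\rho\ge 0$ explicitly, which the paper leaves implicit. Both routes are sound.
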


\begin{proof}
Assume first $w\in C^1$, and let $\eta_\delta(\rho)$ be a $C^{1,1}$ regularization of the positive part $(\rho)_+=\max\{\rho,0\}$ as $\delta\searrow 0$, such that $\eta_\delta'(\rho), \eta''_\delta(\rho) \geq 0$, and $\eta''_\delta(\rho)=\frac{1}{\delta}\chi_{0<\rho<\delta}$. We can use the test function $\eta'(\rho-\rho_{\max})$ in Definition \ref{def3} to get
\begin{align}
    & \frac{d}{dt}\int_\Omega \eta_\delta(\rho-\rho_{\max})~dx = -\frac{\sigma^2}{2}\int_\Omega\eta''_\delta(\rho-\rho_{\max})|\nabla \rho|^2~ dx \nonumber\\
    & \ \ + \int_\Omega \sqrt{F(\rho)} w \eta''_\delta(\rho-\rho_{\max})\cdot\nabla \rho~ dx - \beta \int_{\Gamma_E}\rho \eta'_\delta(\rho-\rho_{max}) \nonumber\\
    & \ \leq \int_\Omega \sqrt{F(\rho)} w \eta''_\delta(\rho-\rho_{\max})\cdot\nabla \rho~ dx\nonumber\\
    & \ = - \int_\Omega \xi_\delta(\rho)\mathrm{div} w~ dx,\label{eq:estimate1}
\end{align}
where
\begin{equation*}
    \xi_\delta(\rho):=\int_{\rho_{\max}}^\rho\sqrt{F(\rho)}\eta''_\delta(z-\rho_{\max}) dz.
\end{equation*}
Now, it is clear that $\xi_\delta(\rho)=0$ if $\rho<\rho_{\max}$. Moreover, for $\rho_{\max}<\rho<\rho_{\max}+\delta$ we have
\begin{align*}
    & \xi_\delta(\rho)= \frac{1}{\delta}\int_{\rho_{\max}}^\rho \sqrt{F(z)} dz\leq C,
\end{align*}
since $F$ is uniformly bounded. Finally, in the case $\rho> \rho_{\max}+\delta$, we have
\begin{align*}
    & \xi_\delta(\rho)= \frac{1}{\delta}\int_{\rho_{\max}}^{\rho_{\max}+\delta} \sqrt{F(z)} dz\rightarrow \sqrt{F(\rho_{\max})} =0,
\end{align*}
as $\delta \searrow 0$. Combining all these assumptions, we get $ \xi_\delta(\rho)$ uniformly bounded and such that $\xi_\delta(\rho)\rightarrow 0$ almost everywhere on $\Omega\times [0,T]$. Since $w\in C^1$, we can integrate in time in \eqref{eq:estimate1} and send $\delta\searrow 0$ to get
\begin{equation*}
    \int_\Omega (\rho(x,t)-\rho_{\max})_+ dx \leq \int_\Omega (\rho_0(x)-\rho_{\max})_+ dx = 0,
\end{equation*}
which gives the assertion.
\noindent The general case $w\in Q$ can be recovered by a standard approximation argument, we omit the details.
\end{proof}

\begin{thm}[Existence in the general case]
Let $\rho_0\in L^2(\Omega)$. Let \ref{a:reg} and \ref{a:conv} be satisfied and let $\sigma > 0$. Then the variational problem \eqref{e:abstractminw} has at least a weak solution $(\rho,w) \in V \times Q$ with initial condition $\rho_0$ in the sense of Definition \ref{def2}. If in addition \ref{a:crowd} is satisfied, then $\rho\in \Upsilon$.
\end{thm}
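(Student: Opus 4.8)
The plan is to run the direct method of the calculus of variations on the third formulation, i.e.\ on the pair $(\rho,w)$ subject to \eqref{eq:weak_eqn3}. First I would check that the admissible set is nonempty and that $\inf J>-\infty$: taking $w=0$, the constraint \eqref{eq:weak_eqn3} is the weak form of the linear heat equation with the Robin/Neumann conditions encoded on $\Gamma_E$ and $\Gamma_N$, which (since $\sigma>0$) has a unique solution $\rho\in V$ by standard parabolic theory, so the infimum is finite; and since $E\ge 0$ (more precisely, $E$ is bounded below on the admissible states, as in all cases of interest), $J$ is coercive in $w$. Hence, for a minimizing sequence $(\rho_n,w_n)$ we obtain $\|w_n\|_Q\le C$, and Lemma~\ref{rhowlemma} then gives $\|\rho_n\|_V\le C$.

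Next I would extract compactness. Up to a subsequence, $w_n\rightharpoonup w$ in $Q=L^2(\Omega\times(0,T))$, $\rho_n\rightharpoonup\rho$ in $L^2(0,T;H^1(\Omega))$ and $\partial_t\rho_n\rightharpoonup\partial_t\rho$ in $L^2(0,T;H^{-1}(\Omega))$. Because $\sigma>0$ furnishes the spatial $H^1$ bound, the Aubin--Lions lemma yields $\rho_n\to\rho$ strongly in $L^2(\Omega\times(0,T))=Q$, and along a further subsequence $\rho_n\to\rho$ a.e.\ in $\Omega\times(0,T)$. The initial condition passes to the limit because the evaluation $u\mapsto u(0)$ is continuous, hence weakly continuous, from $V$ to $L^2(\Omega)$, so $\rho(0)=\lim\rho_n(0)=\rho_0$.

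The heart of the argument is passing to the limit in \eqref{eq:weak_eqn3}. The time-derivative and diffusion terms pass by weak convergence, and the boundary term, being linear, passes by continuity of the trace operator $L^2(0,T;H^1(\Omega))\to L^2(0,T;L^2(\Gamma_E))$. For the nonlinear flux $\sqrt{F(\rho_n)}\,w_n$: by \ref{a:reg}, $F$ is nonnegative and bounded, so $\sqrt{F}$ is continuous and bounded by $\sqrt{\|F\|_{L^\infty}}$; hence $\sqrt{F(\rho_n)}\to\sqrt{F(\rho)}$ a.e.\ and, by dominated convergence, $\sqrt{F(\rho_n)}\,\varphi\,\nabla\psi\to\sqrt{F(\rho)}\,\varphi\,\nabla\psi$ strongly in $L^2(\Omega\times(0,T))$ for every fixed $\psi\in H^1(\Omega)$, $\varphi\in L^2(0,T)$. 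Pairing this strong convergence against the weak convergence $w_n\rightharpoonup w$ shows that $\sqrt{F(\rho_n)}\,w_n$ converges in the required duality to $\sqrt{F(\rho)}\,w$, so $(\rho,w)$ satisfies \eqref{eq:weak_eqn3}. Finally, $J$ is weakly lower semicontinuous: $w\mapsto\|w\|_{L^2}^2$ is convex and continuous, while $\liminf_n\int_0^T\!\!\int_\Omega E(\rho_n)\ge\int_0^T\!\!\int_\Omega E(\rho)$ follows from Fatou (using the lower bound on $E$) or from weak lower semicontinuity of the convex functional associated with $E$, cf.\ \ref{a:conv}. Therefore $J(\rho,w)\le\liminf_n J(\rho_n,w_n)=\inf J$, and $(\rho,w)$ is a weak solution in the sense of Definition~\ref{def3}. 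The last claim is then immediate: if \ref{a:crowd} holds, Lemma~\ref{lemma_crowd} applies to this $(\rho,w)$ and gives $\rho(\cdot,t)\in\Upsilon$ for all $t\in(0,T]$.

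I expect the main obstacle to be the passage to the limit in the nonlinear term $\sqrt{F(\rho_n)}\,w_n$ under only weak convergence of $w_n$: this is precisely what forces strong $L^2$-compactness of $\rho_n$, and hence why the hypothesis $\sigma>0$ cannot be dropped (it supplies the spatial $H^1$ estimate feeding Aubin--Lions). A lesser point is ensuring $\inf J>-\infty$, which is why one wants $E$ bounded below; this is also where assumption~\ref{a:conv} helps.
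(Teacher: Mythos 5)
Your proposal is correct and follows essentially the same route as the paper: the direct method on the $(\rho,w)$ formulation, with the a priori bound of Lemma~\ref{rhowlemma}, Aubin--Lions compactness for $\rho_n$ (which is exactly where $\sigma>0$ enters), strong convergence of $\sqrt{F(\rho_n)}$ paired against weak convergence of $w_n$ to pass to the limit in the constraint, weak lower semicontinuity of $J$, and finally Lemma~\ref{lemma_crowd} for the $\Upsilon$-confinement. The extra details you supply (nonemptiness of the admissible set via $w=0$, continuity of the trace and of the initial evaluation) are omitted in the paper but entirely consistent with its argument.
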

\begin{proof}
We show the existence of a minimizer by the direct method. Let $(\rho_k,w_k) \in V \times Q$ be a minimizing sequence. Then we can assume without loss of generality that there exists a constant $c$, such that
$$
\int_0^T \int_\Omega (|w_k|^2 +E(\rho_k))~dx~dt \leq c.
$$
Clearly, $w_k$ is bounded in $L^2(\Omega \times (0,T))$ and hence has a weakly convergent sub-sequence with limit $\hat w$, again denoted by $w_k$. Now we use Lemma \ref{rhowlemma} to obtain boundedness of $\rho_k \in V$, from which we can extract another weakly convergent sub-sequence. From the Lemma of Aubin and Lions, cf. \cite{S1997}, we have a compact embedding of $\rho_k \in L^2(\Omega\times (0,T))$. Then the continuity and boundedness of $F$ implies that $\sqrt{F(\rho_k)}$ converges strongly in $L^2(\Omega\times (0,T))$ and the limit $\sqrt{F(\hat \rho)}$ is bounded. Hence, the product $\sqrt{F(\rho_k)}w_k$ converges weakly in $L^1(\Omega \times (0,T))$ and the limit $\sqrt{F(\hat \rho)} \hat w$ is in $L^2(\Omega\times (0,T))$, which allows to pass to the limit in the weak formulation of the constraint equation. Thus, $(\hat \rho, \hat w)$ is admissible, and the weak lower semicontinuity of the objective functional implies that it is indeed a minimizer. Note that the convexity of $E$ is crucial in this case, as it provides lower semi continuity of $\int_\Omega E(\rho_k) dx$.
\noindent Assume now that \ref{a:crowd} is satisfied. Then, due to Lemma \ref{lemma_crowd} for all $k$ we have $\rho_k\in \Upsilon$, which can be passed to the limit (possibly by extracting a further subsequence converging almost everywhere) to obtain the desired result.
\end{proof}


\noindent We now provide an alternative argument, which can be extended to the non-viscous case for zero-flux flux boundary conditions, i. e. for $\beta=0$. The argument below provides less regularity for the minimizer compared to the previous result, but the proof is much shorter.

\begin{thm}[Existence for concave mobility]\label{th:concave}
Let \ref{a:reg}, \ref{a:conv}, \ref{a:crowd}, and \ref{a:conc} be satisfied and let $\sigma > 0$. Then the variational problem \eqref{e:abstractminj} has at least one minimizer $(\rho,j) \in L^\infty(\Omega \times (0,T)) \times Q$ such that
$\rho(x) \in \Upsilon$ for almost every $x \in \Omega$. If $E$ is strictly convex the minimizer is unique.
\end{thm}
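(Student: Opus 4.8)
I would prove Theorem \ref{th:concave} by the direct method applied to the momentum formulation \eqref{e:abstractminj} in the sense of Definition \ref{def2}, the whole point being that — in contrast to the $w$-formulation used for the non-concave case — the objective $\widetilde I_T$ is now \emph{jointly convex} in the pair $(\rho,j)$, precisely because $F$ is concave. The structural fact at the heart of the argument is that $K(j,\rho)=|j|^2/F(\rho)$ is of perspective type: for $F$ smooth, concave and positive one checks that its Hessian in $(\rho,j)$ is positive semidefinite — for scalar $j$ its determinant equals $-2j^2F''(\rho)/F(\rho)^3\ge 0$ by concavity, while $\partial_{jj}K=2/F(\rho)>0$, and the vectorial case is analogous — and that the singular extension in \eqref{def_K} is exactly the lower semicontinuous convex envelope of $|j|^2/F(\rho)$. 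Consequently $(\rho,j)\mapsto\int_0^T\!\int_\Omega K(j,\rho)\,dx\,dt$ is a convex, nonnegative integral functional, hence sequentially weakly lower semicontinuous on $Q\times Q$ by the classical lower semicontinuity theory for convex integral functionals. This is the observation that makes the proof short, and it is where assumption \ref{a:conc} is consumed.

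Next I would run the direct method. Let $(\rho_k,j_k)\in V\times Q$ be a minimizing sequence with $\widetilde I_T(\rho_k,j_k)\le c$. Since $F\le\|F\|_\infty$ by \ref{a:reg}, $\|j_k\|_Q^2\le\|F\|_\infty\int_0^T\!\int_\Omega K(j_k,\rho_k)\le 2c\|F\|_\infty$, so along a subsequence $j_k\rightharpoonup\hat j$ in $Q$. Testing \eqref{eq:weak_eqn2} with $\psi=\rho_k$ and repeating the computation of Lemma \ref{rhowlemma} — which here is even simpler, $j_k$ being directly an $L^2$ source — gives $\|\rho_k\|_V\le C_1\|j_k\|_Q+C_2$ uniformly; hence (subsequence) $\rho_k\rightharpoonup\hat\rho$ in $V$, and by Aubin--Lions $\rho_k\to\hat\rho$ strongly in $L^2(\Omega\times(0,T))$, strongly in $L^2((0,T)\times\Gamma_E)$ (using the compact trace $H^1(\Omega)\hookrightarrow\hookrightarrow L^2(\Gamma_E)$) and a.e. (note $\sigma>0$ is essential in this step). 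To confine the density I would invoke the Stampacchia/entropy argument of Lemma \ref{lemma_crowd}, now applied directly to \eqref{eq:weak_eqn2} with the truncations $\eta'_\delta(\rho_k-\rho_{\max})$ and $\eta'_\delta(-\rho_k)$ (first for smooth $j_k$, then by the same approximation): assumption \ref{a:crowd} forces $F$ to vanish at and beyond the endpoints of $\Upsilon$, so finiteness of $\widetilde I_T(\rho_k,j_k)$ forces $j_k=0$ wherever $\rho_k$ leaves $\Upsilon$, and the flux term cannot spoil the maximum principle; this gives $\rho_k(\cdot,t)\in\Upsilon$ a.e., and passing to the a.e.\ limit yields $\hat\rho\in\Upsilon$ a.e., in particular $\hat\rho\in L^\infty(\Omega\times(0,T))$ with the claimed bound.

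It then remains to pass to the limit and verify minimality. The constraint \eqref{eq:weak_eqn2} is affine in $(\rho,j)$, so along the established convergences each term passes to the limit — $\partial_t\rho_k\rightharpoonup\partial_t\hat\rho$ in $L^2(0,T;H^{-1})$, $\nabla\rho_k\rightharpoonup\nabla\hat\rho$, $j_k\rightharpoonup\hat j$, $\int_{\Gamma_E}\beta\rho_k\psi\to\int_{\Gamma_E}\beta\hat\rho\psi$ — and $\hat\rho(0)=\rho_0$ by continuity of the initial trace on $V$, so $(\hat\rho,\hat j)$ is admissible. For lower semicontinuity, $\int_0^T\!\int_\Omega E(\rho_k)\to\int_0^T\!\int_\Omega E(\hat\rho)$ by a.e.\ convergence, $E\ge 0$ on $\Upsilon$ and Fatou (convexity from \ref{a:conv} would in any case suffice), while $\liminf_k\int_0^T\!\int_\Omega K(j_k,\rho_k)\ge\int_0^T\!\int_\Omega K(\hat j,\hat\rho)$ by the weak lower semicontinuity of the convex functional noted in the first paragraph. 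Hence $\widetilde I_T(\hat\rho,\hat j)\le\inf$, so $(\hat\rho,\hat j)$ is a minimizer. For uniqueness under strict convexity of $E$: the admissible set is affine, hence convex, and on it $\widetilde I_T$ is convex; moreover $\rho\mapsto\frac12\int_0^T\!\int_\Omega E(\rho)$ is strictly convex and, for fixed $\rho$, $j\mapsto\frac12\int_0^T\!\int_\Omega K(j,\rho)$ is strictly convex (since $\partial_{jj}K=2/F(\rho)>0$ on $\{F(\rho)>0\}$ and $j$ is pinned to $0$ on $\{F(\rho)=0\}$). If $(\rho_1,j_1)$ and $(\rho_2,j_2)$ are both minimizers, convexity along the connecting segment forces equality throughout, whence the strictly convex $E$-term gives $\rho_1=\rho_2$ a.e., and then strict convexity of the $K$-term in $j$ gives $j_1=j_2$ a.e.

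\textbf{Main obstacle.} The only genuinely nontrivial point is establishing the joint convexity of $K(j,\rho)=|j|^2/F(\rho)$ together with the correct reading of its singular extension \eqref{def_K} as the lower semicontinuous convex envelope: this is what upgrades lower semicontinuity from the $\sqrt{F}$-reformulation device (necessary in the non-concave theorem) to a direct argument that tolerates merely weak convergence of $j_k$ and the degeneracy $F(\rho_k)\to 0$. The secondary technicality is running the $L^\infty$-confinement of Lemma \ref{lemma_crowd} in the momentum variable, where $j_k$ only lies in $L^2$ rather than in $H(\mathrm{div})$, which is handled by smoothing $j_k$ first and passing to the limit as in that lemma.
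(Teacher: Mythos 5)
Your proof is correct and its backbone is the same as the paper's: the direct method applied to the momentum formulation \eqref{e:abstractminj}, with the joint convexity of the perspective-type integrand $K(j,\rho)=|j|^2/F(\rho)$ (a consequence of \ref{a:conc}) supplying weak lower semicontinuity. The differences are in the supporting machinery, and they are worth noting. The paper's proof is deliberately minimal: it extracts $j_k\rightharpoonup\hat\jmath$ in $Q$, reads off $\rho_k\in\Upsilon$ directly from finiteness of the functional (via the $+\infty$ extension in \eqref{def_K}), takes only a weak-$*$ limit of $\rho_k$ in $L^\infty$, and passes to the limit in the affine constraint; no energy estimate, no Aubin--Lions. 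This economy is what makes the argument portable to $\sigma=0$ with $\beta=0$, as the paper remarks immediately afterwards. You instead invoke Lemma \ref{rhowlemma} and Aubin--Lions to get strong $L^2$ and a.e.\ convergence of $\rho_k$, which ties your argument to $\sigma>0$ but lets you handle the $E$-term by pointwise convergence rather than convexity, and you rerun the truncation argument of Lemma \ref{lemma_crowd} in the momentum variable to confine the density. That last step is actually a more careful justification than the paper's one-line assertion that boundedness of the functional gives $F(\rho_k)>0$ almost everywhere (which, strictly, only forces $j_k=0$ on the set where $F(\rho_k)=0$); your observation that $j_k$ vanishes wherever $\rho_k$ leaves $\Upsilon$, so the flux term cannot spoil the maximum principle, closes that gap cleanly. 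Likewise your explicit Hessian computation substantiates the convexity claim that the paper only states. In short: same route, with a heavier but more self-contained compactness step and a more rigorous confinement argument, at the price of losing the easy extension to the inviscid case.
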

\begin{proof}
Let $(\rho_k,j_k)$ be a minimizing sequence. Since $F(\rho_k)$ is bounded, $j_k$ is uniformly bounded in $L^2(\Omega \times (0,T))$ and we can extract a weakly convergent sub-sequence with limit $\hat j$. By the boundedness of the objective functional, we have $F(\rho_k) > 0$ almost everywhere, which gives $\rho_k(x,t)\in \Upsilon$ for all $k$ and for all $(x,t)$. From the latter we can extract a sub sequence, again denoted by $\rho_k$ such that $\rho_k \rightharpoonup^* \hat \rho$ in $L^\infty(\Omega \times (0,T))$, respectively weakly in $L^p$ for $p \in (1,\infty)$. Since the constraint PDE is linear we can easily pass to the limit in a weak formulation and see that
$$ \partial_t \hat \rho + \nabla \cdot ( \hat j) = \frac{\sigma^2}2 \Delta \hat \rho, $$
holds in a weak sense. Moreover, the convexity of the objective functional guaranteed by \ref{a:conv}, \ref{a:crowd}, and \ref{a:conc} (recall that $1/F(\rho)$ is extended to $+\infty$ when $F(\rho)=0$)  implies weak semi-continuity an hence $(\hat \rho, \hat j)$ is a minimizer.
\noindent The uniqueness follows from a standard strict convexity argument.
\end{proof}

\begin{rem}
The result in Theorem \ref{th:concave} can be extended to the case $\sigma=0$ in case the boundary conditions are posed with $\beta=0$, i. e. no exits at the boundary. In this case, the boundary condition can be trivially passed to the limit $k\rightarrow +\infty$ above, whereas we are not able to close such argument in case $\beta>0$. This case requires a more refined analysis based on the theory of nonlinear conservation laws with boundary conditions, cf. \cite{bardos}.
\end{rem}

\begin{rem}
We mention that under the above assumptions one might expect Gamma-convergence of the optimal control problems as $\sigma \rightarrow 0$. The lower semicontinuity arguments can be used to verify the lower bound inequality, however it is so far unclear how the upper bound inequality can be verified. 
\end{rem}

\noindent Note that the convexity constraint \ref{a:conv} can be weakened, i.e. the function $E$ has to be weakly lower semicontinuous in suitable
function spaces to guarantee existence of minimizers. Since we only consider convex functions $E$, we did not state this more general result.

\subsection{Adjoint Equations and Optimality System}

In the following we investigate the existence of adjoints, i.e. the optimality condition with respect to the state variable $\rho$:
\begin{prop}
Let assumption \ref{a:reg} and \ref{a:conv} be satisfied and let $\rho$ be such that $H(\rho)\geq \gamma$ for some $\gamma > 0$. Then the adjoint equation of problem \eqref{e:abstractminj}, i.e.
\begin{subequations}\label{e:heat}
\begin{align}
\partial_t \phi + \frac{\sigma^2}{2} \Delta \phi & = \frac{1}{2} E'(\rho) - \frac{1}{2} \lvert j \rvert^2 \frac{F'}{F^2}\\
\phi(x,T) &= 0
\end{align}
\end{subequations}
with boundary conditions \eqref{e:adjointbc} and terminal data $\phi = 0$ at $t=T$ has a unique solution $\phi \in L^q(0,T; W^{1,q}(\Omega))$ with $q < \frac{N+2}{N+1}$.
\end{prop}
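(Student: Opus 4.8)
The plan is to treat \eqref{e:heat} as a linear backward heat equation with a Robin-type boundary condition and a right-hand side whose regularity is limited by the term $|j|^2 F'/F^2$. Since $F = H$ is bounded and $H(\rho) \geq \gamma > 0$, the coefficient $F'/F^2$ is bounded, so the source $g := \tfrac12 E'(\rho) - \tfrac12 |j|^2 F'/F^2$ lies in $L^1(0,T;L^1(\Omega))$ but in general no better, because $j \in Q = L^2(\Omega\times(0,T))$ only gives $|j|^2 \in L^1$. (Here $E'(\rho)$ is bounded because $\rho \in \Upsilon$ and $E \in C^1$, so that term is harmless; the obstruction is entirely the quadratic flux term.) After the time reversal $t \mapsto T - t$, which turns the terminal condition into an initial condition and \eqref{e:heat} into a forward parabolic problem, one is exactly in the setting of parabolic equations with $L^1$ data. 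The conclusion $\phi \in L^q(0,T;W^{1,q}(\Omega))$ for $q < \frac{N+2}{N+1}$ is precisely the classical maximal-regularity statement for the heat equation with $L^1$ right-hand side, due to Boccardo–Gallou\"et and collaborators, and carried over to the Robin boundary condition \eqref{e:adjointbc} with $\beta \geq 0$.

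First I would set up the weak/renormalized solution framework: fix the function space $L^q(0,T;W^{1,q}(\Omega))$ with the stated exponent range, note that $q' > N+2$ so that test functions in $L^{q'}(0,T;W^{1,q'}(\Omega)) \cap L^\infty$ are admissible against an $L^1$ source by the Sobolev embedding $W^{1,q'} \hookrightarrow L^\infty$ in dimension $N \leq 2$. Then I would regularize: replace $g$ by a truncation $g_n := T_n(g)$ (or mollify $j$), obtaining for each $n$ a unique smooth solution $\phi_n$ of the regularized backward problem by standard linear parabolic theory with the Robin condition. Next I would derive the uniform a priori bound $\|\phi_n\|_{L^q(0,T;W^{1,q}(\Omega))} \leq C\|g\|_{L^1(\Omega\times(0,T))} + C$ independently of $n$; this is the heart of the matter and uses the Boccardo–Gallou\"et truncation technique — test with $T_k(\phi_n)$-type functions, exploit that the Robin boundary term $\int_{\Gamma_E}\beta \phi_n T_k(\phi_n)$ has a favourable sign (it adds a nonnegative contribution after time reversal), and interpolate the resulting level-set estimates to recover the sub-critical $W^{1,q}$ bound. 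Finally I would pass to the limit: the uniform bound gives weak convergence $\phi_n \rightharpoonup \phi$ in $L^q(0,T;W^{1,q}(\Omega))$, and since the equation is linear in $\phi$ and $g_n \to g$ strongly in $L^1$, the limit $\phi$ solves the stated problem; the terminal/boundary conditions pass to the limit by continuity of the trace operators. Uniqueness follows by duality: the difference of two solutions solves the homogeneous problem, and testing the dual problem (forward heat equation with smooth data and the adjoint Robin condition) against it — using again that $q' > N+2$ makes this pairing legitimate — forces the difference to vanish.

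The main obstacle is the a priori estimate in the low-integrability regime: one cannot simply test with $\phi_n$ because $\phi_n$ is not known a priori to be in the energy space when the data is merely $L^1$, so the Boccardo–Gallou\"et argument with truncations $T_k(\phi_n)$ and the careful bookkeeping of the measure of super-level sets is genuinely needed, and it must be carried out in the presence of the Robin boundary term on $\Gamma_E$ rather than in the pure Dirichlet or Neumann case treated in the literature. The sign of $\beta \geq 0$ is what makes this adaptation work; the rest of the proof is routine once that estimate is in hand. A secondary (minor) point is verifying that $|j|^2 F'/F^2 \in L^1$ with a bound depending only on the data and on $\gamma$, which is immediate from $\|j\|_Q$ finite and $|F'/F^2| \leq \|F'\|_\infty/\gamma^2$ on the relevant range of $\rho$.
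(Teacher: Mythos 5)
Your proposal is correct and follows exactly the route the paper takes: the paper simply observes that the right-hand side of \eqref{e:heat} is only in $L^1(\Omega\times(0,T))$ and invokes the results of Boccardo and Gallou\"et \cite{BG1989} for existence, uniqueness, and the regularity $\phi\in L^q(0,T;W^{1,q}(\Omega))$, $q<\frac{N+2}{N+1}$. You have filled in the truncation/approximation details and the adaptation to the Robin boundary condition, which the paper leaves implicit.
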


\noindent Note that the right hand side of \eqref{e:heat} is only in $L^1(\Omega \times (0,T))$. Existence and uniqueness of solutions $\phi$
follows from the results of Boccardo and Gallou{\"e}t, cf. \cite{BG1989}. However this result does not provide the necessary
regularity for $\phi$ to define the Lagrange functional $L_T$ properly.
%
%

\noindent Finally we investigate the full optimality system, where we eliminate $j=F(\rho) \nabla \phi$ and obtain
\begin{subequations}\label{e:mfgtype}
\begin{align}
\partial_t {\rho} &= \frac{\sigma^2}2 \Delta {\rho} - \nabla \cdot( F(\rho) \nabla \phi)  \label{e:mfgtype1}\\
\partial_t {\phi} &= -\frac{\sigma^2}2 \Delta {\phi} + \frac{1}{2} E'(\rho) - \frac{1}{2} F'(\rho) \lvert \nabla \phi \rvert^2  \label{e:mfgtype2}.
\end{align}
with boundary conditions
\begin{align}
(-\frac{\sigma^2}{2} \nabla \rho + F(\rho) \nabla \phi) \cdot n &=
\begin{cases}
0 \quad \text{on } \Gamma_N \\
\beta \rho \quad \text{on } \Gamma_E
\end{cases}
&\frac{\sigma^2}2 \nabla \phi \cdot n &=
\begin{cases}
0 \quad \text{on }\Gamma_N \\
-\beta \phi  \quad \text{on } \Gamma_E .
\end{cases}\label{e:boundary_opt}
\end{align}
\end{subequations}

\noindent To show uniqueness of the forward backward system \eqref{e:mfgtype} we follow an approach proposed by Lasry and Lions for mean field games, cf. \cite{LL2007}.

\begin{thm}[Uniqueness for the optimality system]
For a fixed initial condition $\rho_0\in L^2(\Omega)$, there exists a unique weak solution $$(\rho,\phi)\in L^2(0,T;H^1(\Omega))\times L^2(0,T;H^1(\Omega))$$ to the optimality system \eqref{e:mfgtype} with boundary conditions \eqref{e:boundary_opt}.
\end{thm}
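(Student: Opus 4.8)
The plan is to adapt the classical Lasry--Lions monotonicity argument for mean field game systems to the present setting with nonlinear mobility $F(\rho)$ and Robin-type boundary conditions. Suppose $(\rho_1,\phi_1)$ and $(\rho_2,\phi_2)$ are two weak solutions of \eqref{e:mfgtype}--\eqref{e:boundary_opt} sharing the same initial datum $\rho_0$. Set $\bar\rho=\rho_1-\rho_2$ and $\bar\phi=\phi_1-\phi_2$. First I would test the difference of the two $\rho$-equations \eqref{e:mfgtype1} with $\bar\phi$, and the difference of the two $\phi$-equations \eqref{e:mfgtype2} with $\bar\rho$, integrate over $\Omega\times(0,T)$, and subtract. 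The time-derivative terms combine, after an integration by parts in $t$, into $\int_\Omega \bar\rho(T)\bar\phi(T)\,dx - \int_\Omega \bar\rho(0)\bar\phi(0)\,dx$; both boundary-in-time terms vanish because $\bar\rho(0)=0$ from the shared initial condition and $\bar\phi(T)=0$ from the terminal condition $\phi_i(x,T)=0$. Likewise the Laplacian terms cancel: $\frac{\sigma^2}{2}\int(\Delta\bar\rho\,\bar\phi + \Delta\bar\phi\,\bar\rho)$ integrates by parts to a pure boundary expression, which I would check is annihilated (or given the right sign) by the boundary conditions \eqref{e:boundary_opt} — on $\Gamma_N$ everything is zero, and on $\Gamma_E$ the Robin conditions $\frac{\sigma^2}{2}\nabla\bar\phi\cdot n=-\beta\bar\phi$ and $(-\frac{\sigma^2}{2}\nabla\bar\rho+\dots)\cdot n=\beta\bar\rho$ produce a term proportional to $-\beta\int_{\Gamma_E}\bar\rho\,\bar\phi$ that must be tracked together with the interior terms.

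What remains after these cancellations is an identity of the form
\begin{equation*}
\int_0^T\!\!\int_\Omega \Big[ \big(F(\rho_1)\nabla\phi_1 - F(\rho_2)\nabla\phi_2\big)\cdot(\nabla\phi_1-\nabla\phi_2) \;+\; \tfrac12\big(E'(\rho_1)-E'(\rho_2)\big)(\rho_1-\rho_2) \;-\; \tfrac12\big(F'(\rho_1)|\nabla\phi_1|^2 - F'(\rho_2)|\nabla\phi_2|^2\big)(\rho_1-\rho_2)\Big]\,dx\,dt \;=\; 0,
\end{equation*}
plus the boundary contribution $-\beta\int_0^T\!\int_{\Gamma_E}|\bar\phi|^2$ or similar, which has a favourable sign. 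The convexity of $E$ from \ref{a:conv} makes the $E'$ term nonnegative (strictly positive off the diagonal if $E$ is strictly convex). The crucial point is to show that the sum of the first and third integrands is also nonnegative. I would rewrite that combination: using $F=G=H$ and $j_i=F(\rho_i)\nabla\phi_i$, one checks that
\[
\big(F(\rho_1)\nabla\phi_1-F(\rho_2)\nabla\phi_2\big)\cdot\nabla(\phi_1-\phi_2) - \tfrac12\big(F'(\rho_1)|\nabla\phi_1|^2-F'(\rho_2)|\nabla\phi_2|^2\big)(\rho_1-\rho_2)
\]
is, up to the sign, the second-order remainder of the jointly convex (in $(\rho,j)$) Lagrangian density $K(j,\rho)=|j|^2/F(\rho)$ composed with its dual — i.e. this is exactly the ``monotonicity of the coupled operator'' that underlies the Lasry--Lions uniqueness proof. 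Concretely, I expect the identity
\[
\frac{|j_1|^2}{F(\rho_1)} + \frac{|j_2|^2}{F(\rho_2)} \;\ge\; 2\,\nabla\phi_1\cdot j_2 + \text{(terms that telescope)},
\]
or more cleanly the pointwise inequality obtained by expanding $\big|\sqrt{F(\rho_1)}\nabla\phi_1 - \sqrt{F(\rho_2)}\nabla\phi_2\big|^2 \ge 0$ together with the concavity/structure of $F$, to force the integrand to be $\ge 0$. Hence every term in the identity is nonnegative and their sum is zero, so each vanishes: $\int_{\Gamma_E}|\bar\phi|^2=0$ and (if $E$ is strictly convex, or by the strict positivity of the mobility term away from the diagonal) $\bar\rho\equiv 0$, then $\bar\phi\equiv 0$ follows from the linear well-posedness of the adjoint heat equation with zero data.

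The main obstacle I anticipate is twofold. First, making the integrations by parts rigorous at the stated regularity $(\rho,\phi)\in L^2(0,T;H^1(\Omega))^2$: the products $F'(\rho)|\nabla\phi|^2$ and $F(\rho)\nabla\phi\cdot\nabla\phi$ are only $L^1$ in general, and $\partial_t\bar\rho$, $\partial_t\bar\phi$ live in negative-order spaces, so the duality pairings $\langle\partial_t\bar\rho,\bar\phi\rangle$ need $\bar\phi$ to have enough regularity; this likely forces an approximation/density argument or an a priori bound (e.g. $H(\rho)\ge\gamma>0$, as in the preceding Proposition, so that $\nabla\phi$ gains integrability) before the computation can be carried out, and one must be careful that the solution class in the theorem is the one for which these manipulations are licit. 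Second, pinning down the precise sign of the combined mobility term: unlike the standard MFG case where the coupling enters only through a monotone function of $\rho$ in the Hamilton--Jacobi equation, here the nonlinearity sits in front of $|\nabla\phi|^2$ and in the continuity equation simultaneously, so the ``monotone operator'' structure is not textbook and must be extracted by hand from the joint convexity of $K(j,\rho)$ — this algebraic verification (and identifying exactly which of \ref{a:conv}, \ref{a:conc} are needed) is where the real work lies.
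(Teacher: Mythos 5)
Your strategy is the paper's: cross-test the two equations so as to compute $\frac{d}{dt}\int_\Omega\bar\rho\,\bar\phi\,dx$, kill the temporal boundary terms with $\bar\rho(0)=0$ and $\bar\phi(T)=0$, and exploit the convexity of $E$ together with the structure of the mobility term. Two of the points you leave open resolve cleanly. The spatial boundary terms do not merely carry ``a favourable sign'': they cancel identically, because the two Robin conditions in \eqref{e:boundary_opt} are adjoint to one another --- on $\Gamma_E$ the surviving surface integrand is $(-\tfrac{\sigma^2}{2}\nabla\bar\rho+\bar j)\cdot n\,\bar\phi+\tfrac{\sigma^2}{2}\nabla\bar\phi\cdot n\,\bar\rho=\beta\bar\rho\bar\phi-\beta\bar\phi\bar\rho=0$. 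And your instinct that the mobility term is the monotonicity gap of the jointly convex density $K(j,\rho)=|j|^2/F(\rho)$ is correct; the paper verifies it by hand, rewriting the term as
\[
\bigl(F(\rho_1)-\tfrac12 F'(\rho_1)\bar\rho\bigr)|\nabla\phi_1|^2+\bigl(F(\rho_2)+\tfrac12 F'(\rho_2)\bar\rho\bigr)|\nabla\phi_2|^2-(F(\rho_1)+F(\rho_2))\,\nabla\phi_1\cdot\nabla\phi_2,
\]
and using concavity \ref{a:conc} in the form $F(\rho_2)\le F(\rho_1)-F'(\rho_1)\bar\rho$ (and symmetrically) to bound it below by $\tfrac12(F(\rho_1)+F(\rho_2))|\nabla\phi_1-\nabla\phi_2|^2$. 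This is a \emph{quantitative} coercivity estimate, not mere nonnegativity.

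That quantitative bound is exactly what your endgame is missing, and without it the argument does not close. The theorem does not assume $E$ strictly convex, so $(E'(\rho_1)-E'(\rho_2))\bar\rho\equiv 0$ yields nothing; and $K(j,\rho)=|j|^2/F(\rho)$ is convex but not strictly so (its monotonicity gap vanishes along rays), so ``strict positivity of the mobility term away from the diagonal'' is false, and the vanishing of that term does not by itself force $(\rho_1,j_1)=(\rho_2,j_2)$. The paper instead deduces from the coercive lower bound that $\nabla\phi_1=\nabla\phi_2$ wherever $F(\rho_1)+F(\rho_2)>0$; then $\rho_1$ and $\rho_2$ solve the \emph{same} parabolic problem $\partial_t\rho=\tfrac{\sigma^2}{2}\Delta\rho-\nabla\cdot(F(\rho)\nabla\phi_2)$ with identical initial and boundary data, and uniqueness for that single equation gives $\rho_1=\rho_2$, after which $\bar\phi$ solves a backward heat equation with zero terminal data and vanishes. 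Your concern about the regularity needed to justify the duality pairings is legitimate but not resolved in the paper either, which runs the computation on classical solutions.
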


\begin{proof}
Assume there exists two classical solutions $(\phi_1,\rho_1)$ and $(\phi_2, \rho_2)$ to the
optimality system \eqref{e:optsys}. We denote the difference between the solutions by $\bar{\rho} = \rho_1-\rho_2$ and $\bar{\phi} = \phi_1-\phi_2$, which satisfy
\begin{subequations}\label{e:diff}
\begin{align}
\partial_t \bar{\rho} &= \frac{\sigma^2}2 \Delta \bar{\rho} - \nabla \cdot( F(\rho_1) \nabla \phi_1 - F(\rho_2)\nabla \phi_2) \label{e:diff1}\\
\partial_t \bar{\phi} &= -\frac{\sigma^2}2 \Delta \bar{\phi} + \frac{1}{2} E'(\rho_1) - \frac{1}{2} E'(\rho_2) - \frac{1}{2} F'(\rho_1) \lvert \nabla \phi_1 \rvert^2 + \frac{1}{2} F'(\rho_2) \lvert \nabla \phi_2 \rvert^2 \label{e:diff2}.
\end{align}
\end{subequations}
Note that the differences $\bar{\rho}$ and $\bar{\phi}$ satisfy the following boundary conditions on $\Gamma_E$:
\begin{align}\label{e:bcbar}
(-\frac{\sigma^2}{2}\nabla \bar{\rho} + (F(\rho_1) \nabla \phi_1 - F(\rho_2) \nabla \phi_2)) \cdot n = \beta \bar{\rho} \text{ and } \frac{\sigma^2}{2} \nabla \bar{\phi} + \beta \bar{\phi} = 0.
\end{align}
Using \eqref{e:diff} we calculate (integrating by parts):
\begin{align*}
 & \frac{d}{dt}\int_{\Omega} \bar{\phi}\bar{\rho}~dx = \int_{\Omega} (\bar{\phi} \partial_t \bar{\rho} + \bar{\rho} \partial_t \bar{\phi} )~dx  \\
  &=\int_{\Omega} ( (F(\rho_1) \nabla \phi_1 -F(\rho_2) \nabla \phi_2) \nabla \bar{\phi})~dx+ \frac{1}{2}\int_{\Omega} ( (E'(\rho_1)-E'(\rho_2)) \bar \rho ) dx\\
&~~~ -\int_{\Omega} \frac{1}{2} (F'(\rho_1)\lvert \nabla \phi_1\rvert^2 - F'(\rho_2)\lvert \nabla \phi_2 \rvert^2) \bar \rho ~dx\\
&= \int_{\Omega} [(F(\rho_1) \nabla \phi_1 - F(\rho_2) \nabla \phi_2) \nabla \bar{\phi} - \frac{1}{2} (F'(\rho_1) \lvert\nabla \phi_1 \rvert^2 - F'(\rho_2) \lvert \nabla \phi_2 \rvert^2) \bar{\rho}]~dx\\
&~~~ + \frac{1}{2}\int_{\Omega} (E'(\rho_1)-E'(\rho_2) ) \bar \rho~ dx.
\end{align*}
Note that due to the boundary conditions \eqref{e:bcbar}, all boundary terms that result from the integration by parts in the previous calculation  vanish. Due to the concavity of $F$ we deduce that
\begin{align*}
&(F(\rho_1) \nabla \phi_1 - F(\rho_2) \nabla \phi_2) \nabla \bar{\phi} - \frac{1}{2} (F'(\rho_1) \lvert\nabla \phi_1 \rvert^2 - F'(\rho_2) \lvert \nabla \phi_2 \rvert^2) \bar{\rho} = \\
&= (F(\rho_1) - \frac{1}{2} F'(\rho_1) \bar{\rho}) \lvert \nabla \phi_1 \rvert^2 + (F(\rho_2) + \frac{1}{2} F'(\rho_2) \bar{\rho}) \lvert \nabla \phi_2 \rvert^2 - (F(\rho_1) + F(\rho_2)) \nabla \phi_1 \nabla \phi_2 \\
&\geq \frac{1}{2}(F(\rho_1) + F(\rho_2)) (\nabla \phi_1 - \nabla \phi_2)^2.
\end{align*}
Since $E$ is in $C^1(\Upsilon)$ and convex, $E'$ is monotone and we obtain
\begin{align*}
\frac{d}{dt}\int_{\Omega}& \bar{\phi}\bar{\rho}~dx \geq \int_{\Omega} \frac{1}{2}(F(\rho_1) + F(\rho_2)) (\nabla \phi_1 - \nabla \phi_2)^2~dx.
\end{align*}
We integrate over the interval $[0,T]$ and obtain (since $\bar{\rho}(x,0) = 0$ and $\bar{\phi}(x,T) = 0$) that:
\begin{align}\label{e:inequ}
\int_{\Omega} \frac{1}{2}(F(\rho_1) + F(\rho_2)) (\nabla \phi_1 - \nabla \phi_2)^2~dx \leq 0.
\end{align}
The function $F$ is positive, therefore \eqref{e:inequ} implies that
\begin{align*}
\nabla \phi_1 = \nabla \phi_2 \text{ in } \lbrace \rho_1 > 0 \rbrace \cup \lbrace \rho_2 > 0\rbrace.
\end{align*}
Then $\rho_1$ is a solution of $\partial_t \rho  = \frac{\sigma^2}2 \Delta \rho -\nabla \cdot (F(\rho) \nabla \phi_2) = 0$ and we conclude (by uniqueness) that $\rho_1 = \rho_2$.
\end{proof}

\section{Numerical simulations}\label{s:numerics}

\noindent In this section we present a steepest descent approach for the solution of the parabolic optimal control problem \eqref{e:optcon}. This method can be used for convex optimization problems and defines an iterative scheme to determine the optimal $\rho$ and $v$.

\noindent  The presented numerical simulations focus on the
different behavior of the classical model of Hughes \eqref{e:hughes} and the corresponding general mean field modification \eqref{e:timedepmodhughes}. This comparison requires the simulation of \eqref{e:hughes}, which is done using a finite volume scheme for the nonlinear conservation law and a fast sweeping method for the Eikonal equation, as presented by Di Francesco et al. in \cite{DFMPW2011}.

\noindent The steepest descent scheme for \eqref{e:optcon} can be written as:
\begin{scheme}
Let $\rho_0 = \rho_0(x)$, $\phi(x,t) = \phi_H(x)$, where $\phi_H$ is the solution of the Eikonal equation in the classical Hughes model \eqref{e:hughes} with $\rho(x,t) = \rho_0(x)$ for all $t \in [0,T]$ and $v = \frac{G(\rho)}{F(\rho)} \nabla \phi$, be the given initial data. Then the steepest descent scheme reads as:
\begin{enumerate}
\item Solve the nonlinear convection diffusion equation
\begin{subequations}\label{e:nonlin}
\begin{align}
&\partial_t \rho = \frac{\sigma^2}2 \Delta \rho - \nabla \cdot (F(\rho) v)\\
& (-\frac{\sigma^2}2 \nabla \rho + j)\cdot n =  \beta \rho \text{ on } \Gamma_E~ \text{ and }~(-\frac{\sigma^2}2 \nabla \rho + j)\cdot n = 0 \text{ on } \Gamma_N,
\end{align}
\end{subequations}
forward in time. System \eqref{e:nonlin} is solved implicitly in time. The resulting nonlinear equation $A(\rho,v) = 0$ is discretized using  a mixed hybrid discontinuous Galerkin (MHDG) method, cf. \cite{ES2010}, and solved using Newton's method.
\item Calculate the backward evolution of the adjoint variable $\phi = \phi(x,t)$ using the previously calculated density $\rho = \rho(x,t)$ in
\begin{align*}
&-\partial_t \phi - \frac{\sigma^2}2 \Delta \phi - G'(\rho) v \cdot \nabla \phi =  -\frac{1}{2}F'(\rho) \lvert v \rvert^2 - \frac{1}{2} E'(\rho),\\
&-\frac{\sigma^2}2 \nabla \phi \cdot n - \beta \phi = 0 \text{ on } \Gamma_E ~\text{ and }~-\frac{\sigma^2}2 \nabla \phi \cdot n = 0 \text{ on } \Gamma_N,
\end{align*}
with an implicit in time discretization and a MHDG method for the spatial discretization.
\item Update the velocity via $v = v - \tau (F(\rho) v - G(\rho) \nabla \phi)$, where $\tau$ is a suitably chosen step size.
\item Go to (1) until convergence of the function \eqref{e:opt}.
\end{enumerate}
\end{scheme}

\noindent Egger and Sch\"oberl presented a MHDG method for linear convection dominated problems, which can be adapted for the nonlinear
problems considered. We use the following basis functions for the discretization, i.e.
\begin{align*}
\rho, \phi, v \in P^0(T) \text{ and } \nabla \rho, \nabla \phi \in RT^0(T),
\end{align*}
where $P^0$ denotes piecewise constant basis function on the interval $T$ and $RT^0$ lowest order Raviart-Thomas basis functions.
The Newton iteration in \eqref{e:nonlin} is terminated, if $ \lVert A(\rho,v) \rVert_{L^2(\Omega)} \leq 10^{-6}$.

\noindent Throughout this section we consider the domain $\Omega = [-1,1]$ with exits located at $x=\pm 1$. The interval $\Omega$ is divided into
a set of equidistant intervals of length $h$. Furthermore the maximum density $\rho_{\max}$ is set to $1$.

\subsection{Comparison of the classical and the mean field type Hughes model}

We have seen in Section \ref{s:hughes} that the proposed mean field model has a similar structure as the classical model by Hughes.
Hence we want illustrate the behavior of both models with various experiments. In the first example we focus on the influence
of the boundary conditions as discussed in Section \ref{s:bc}. In the second example we illustrate the basic difference of the
mean field game approach and the classical model of Hughes.

\subsubsection{Behavior for different values of $\beta$}
We consider the time interval $t \in [0,3]$. In the classical Hughes model \eqref{e:hughes} the boundary conditions are set to
\begin{align*}
  \phi(\pm 1,t) = 0 \text{ and } (-\frac{\sigma^2}{2} \Delta \rho + \rho f(\rho) \nabla \phi) \cdot n = \beta \rho,
\end{align*}
with $f(\rho) = 1-\rho$. For the corresponding mean field model \eqref{e:timedepmodhughes} the functions $\rho$ and $\phi$ satisfy \eqref{e:robinbc} and \eqref{e:adjointbc} at $x = \pm 1$. In both cases we use the same diffusion
coefficient, i.e $\sigma = 0.1$. The spatial discretization is set to $h= 5 \times 10^{-2}$ in \eqref{e:hughes} and  $h=10^{-3}$ in \eqref{e:timedepmodhughes}, the time steps to $\Delta t=10^{-5}$ and $\Delta t=10^{-1}$ respectively. The different magnitudes of
the time stepping can be explained by the explicit in time discretization of \eqref{e:hughes} and the implicit time discretization
of \eqref{e:timedepmodhughes}. Figures \ref{f:hughesrho0333} and \ref{f:modhughesrho0333} show the evolution in time of the solutions of \eqref{e:hughes} and \eqref{e:timedepmodhughes} for different values of $\beta$. Although the models have a very similar structure, their
behavior is different. In the mean field model small congestions at the boundary are visible for $\beta=1$. Furthermore we do not observe
the immediate vacuum formation at $x=0$ as in Hughes model \eqref{e:hughes}. People rather tend to ``wait for a little while'' at the center
and then start to move at a higher speed. The expected equilibration of $\phi$ in \eqref{e:timedepmodhughes} is clearly visible in Figure \ref{f:modhughesrho0333} for all values of $\beta$.
\noindent
\begin{figure}
\begin{center}
\includegraphics[width=0.75 \textwidth]{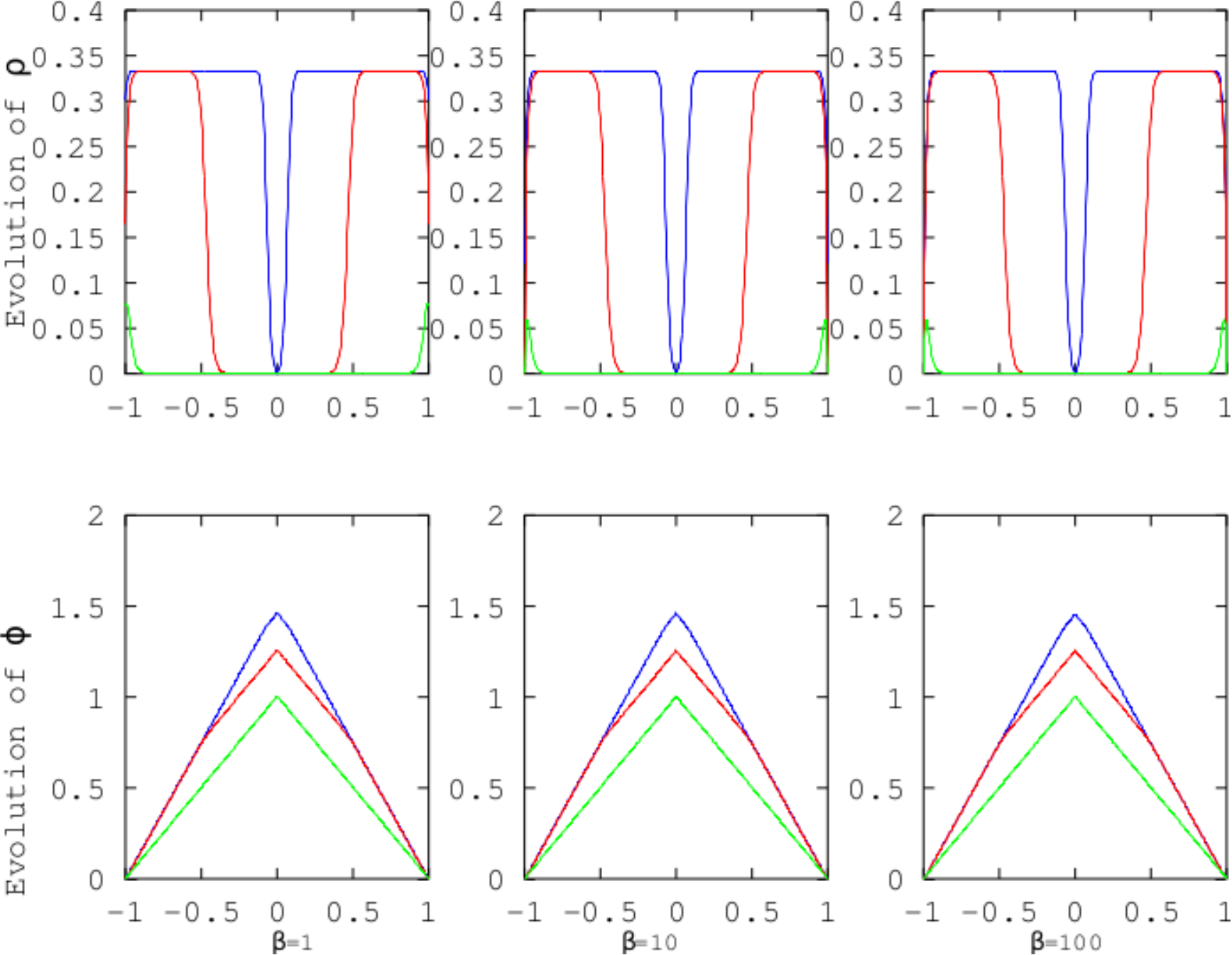}
\caption{Solution of the classical Hughes model \eqref{e:hughes} with initial datum $\rho_0(x) = \frac{1}{3}$ at times $t=0.1, 0.7, 1.5$ for different values of $\alpha$}\label{f:hughesrho0333}
\end{center}
\end{figure}

\begin{figure}
\begin{center}
\includegraphics[width=0.75 \textwidth]{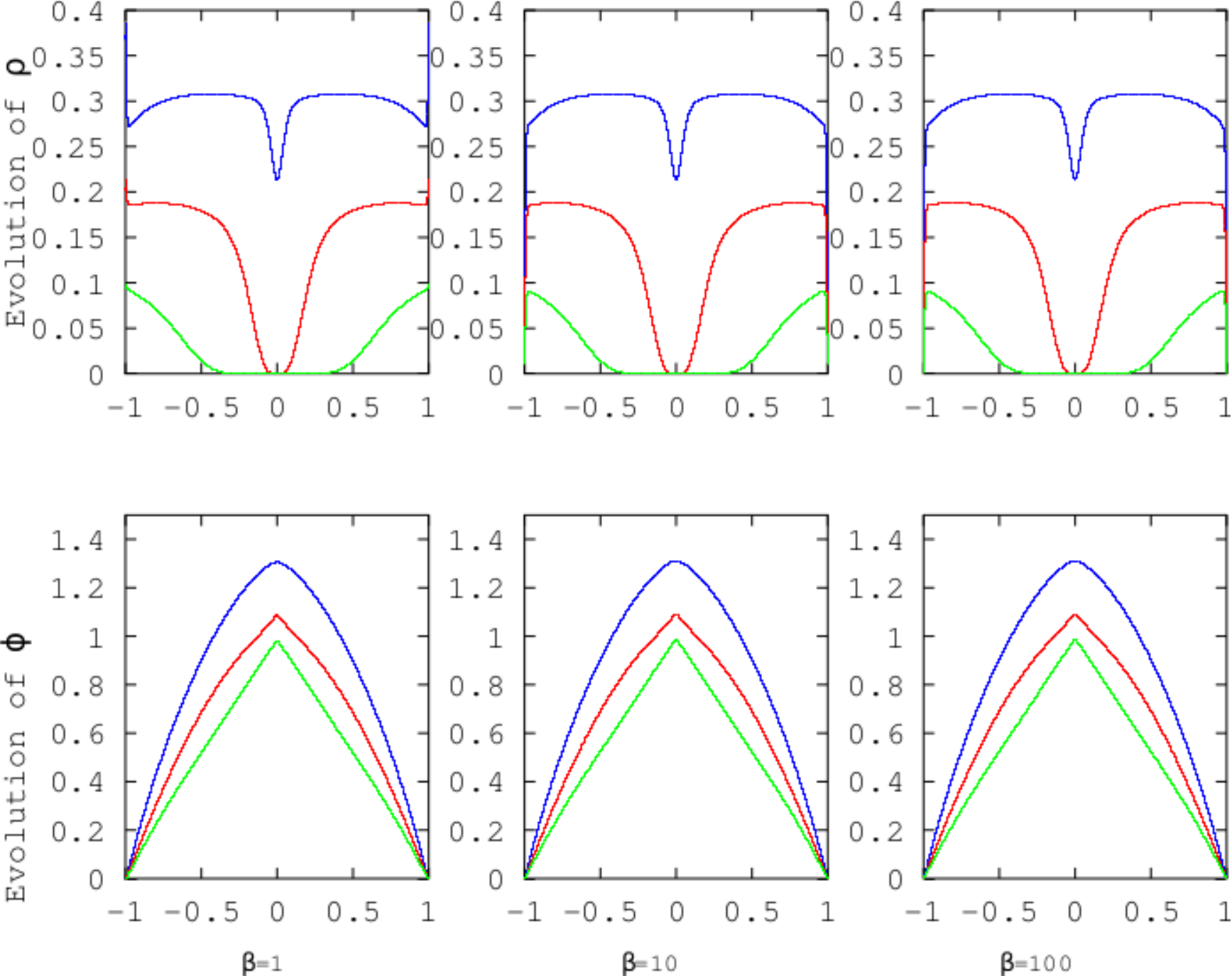}
\caption{Solution of the mean field type Hughes model \eqref{e:timedepmodhughes} with initial datum $\rho_0(x) = \frac{1}{3}$ at times $t=0.1, 0.7, 1.5$ for different values of $\alpha$}\label{f:modhughesrho0333}
\end{center}
\end{figure}

\subsubsection{Fast exit of several groups}

In this example we consider three groups, which want to leave the domain as fast as possible.
The particular initial datum is given by
\begin{align*}
\rho_0(x) &=
\begin{cases}
0.8 &\text{ if } -0.8 \leq x \leq -0.6 \\
0.6 &\text{ if } -0.3 \leq x \leq 0.3 \\
0.95 &\text{ if } 0.4 \leq x \leq 0.8 \\
0 &\text{ otherwise.}
\end{cases}
\end{align*}
A similar example was already considered in \cite{DFMPW2011}, where the simulations showed that the
a small part of the group located around $x=0.6$ initially splits to move to the exit at $x = -1$, but later
on turns around to take the closer exit at $x = 1$. This behavior can be explained by the fact that
the pedestrians in Hughes model \eqref{e:hughes}, adapt their velocity in every time step (depending
on the overall density at that time). Due to the initially high density of people located in front of exit
$x=1$, parts of the group start to move towards $x=-1$, but turn around when the density is decreasing
as more and more people exit.\\
We do not expect to observe this behavior for the modified Hughes model \eqref{e:timedepmodhughes}. One of
the underlying features of the proposed optimal control approach, is the fact that each pedestrian knows the distribution
of all other people at all times. Therefore he/she is anticipating the behavior of the group in the
future and will in this case rather wait than move to the more distant exit. This expected behavior can be observed
in Figure \ref{f:ex3}.
\begin{figure}
\begin{center}
\subfigure[Solution of the classical Hughes model \eqref{e:hughes}]{\includegraphics[width=0.45\textwidth]{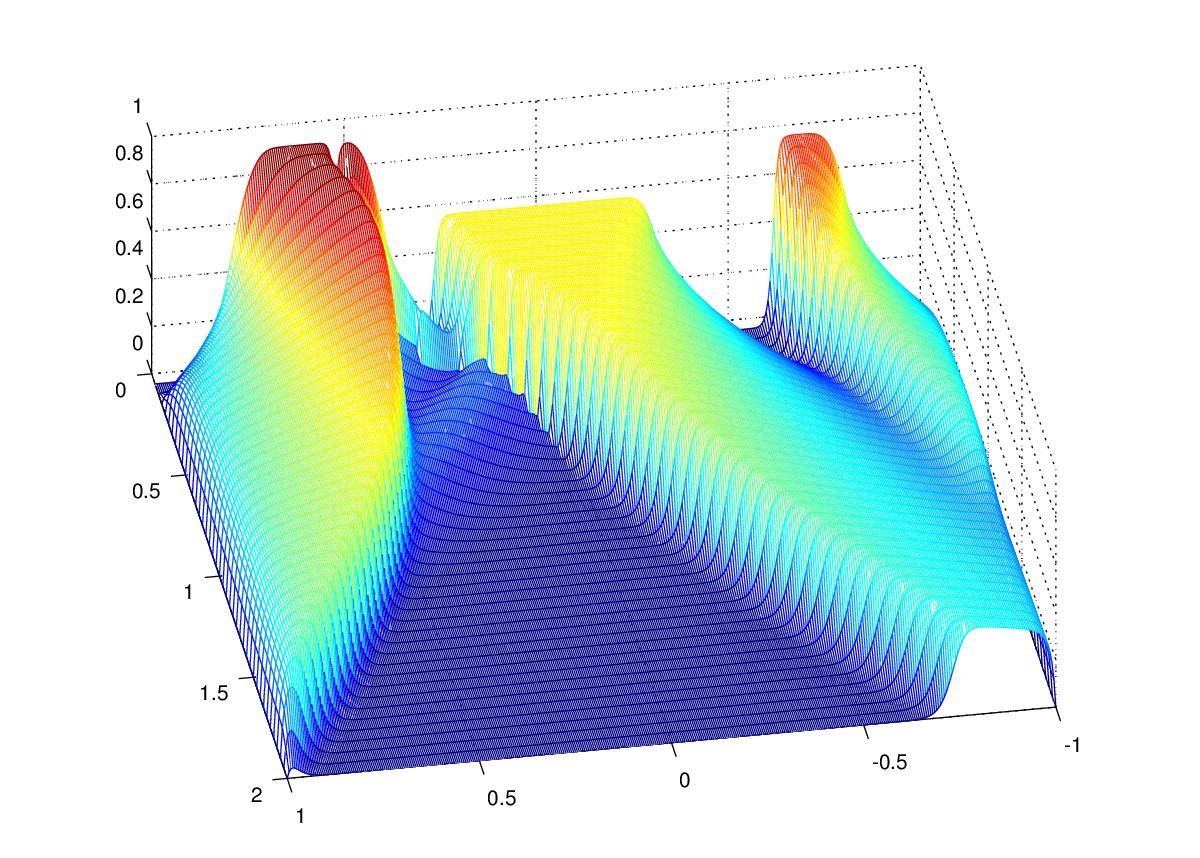}}
\vspace*{0.3cm}
\subfigure[Solution of the mean field optimal control approach \eqref{e:timedepmodhughes}]{\includegraphics[width=0.45\textwidth]{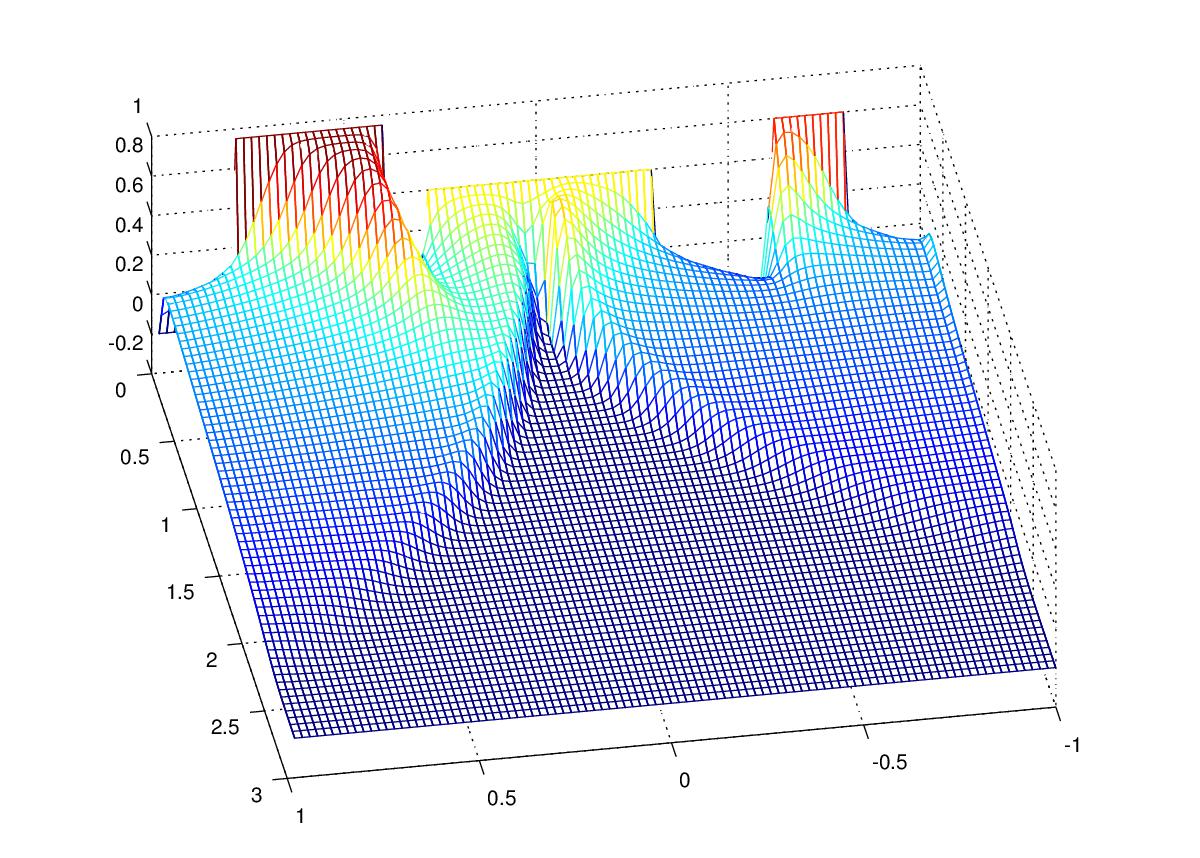}}
\caption{Fast exit scenario for three groups}\label{f:ex3}
\end{center}
\end{figure}
Here the mean field model \eqref{e:timedepmodhughes} was solved using the steepest descent approach detailed before. The parameters
were set to:
\begin{align*}
h = 10^{-3}, \Delta t= 5 \times 10^{-2}, T = 3 \text{ and } \beta = 1.
\end{align*}
Note that the optimal control formulation of \eqref{e:timedepmodhughes} is not a convex problem (the function $H(\rho) = \rho(1-\rho)^2$
is only concave for $\rho \in [0,\frac{2}{3}]$), but the
steepest descent approach converged without any problems. For Hughes model we chose the following parameters:
\begin{align*}
h = 5 \times 10^{-2}, \Delta t = 5 \times 10^{-5} \text{ and } \beta = 1.
\end{align*}

\subsection{Linear vs. nonlinear $E$}
In the final example we illustrate the behavior for different functions $E = E(\rho)$. In particular
we set
\begin{align*}
  F(\rho) = G(\rho) = \rho \quad  \text{ and } \quad E(\rho) &= \begin{cases} &3 \rho \\ &e^{3\rho} \end{cases}.
\end{align*}
The later choice of $E$ actively penalizes regions of high density and urges the group to get out
more efficiently. In this example the group is initially located at
\begin{align*}
\rho_0(x) = \begin{cases}
0.5 &\text{ if }  -0.25 \leq x \leq 0.4 \\
0 &\text{ otherwise}.
\end{cases}
\end{align*}
The parameters are given by $h = 10^{-3}, \Delta t=0.1, T=3$ and $ \beta = 1$. Figure \ref{f:ex4} shows the evolution of
the density $\rho$ for the different choices of $E$ in time. As expected we observe that the group exits much
faster and spreads out more evenly for $E = e^{3\rho}$ than in the linear case.
\begin{figure}
\begin{center}
\includegraphics[width=0.7\textwidth]{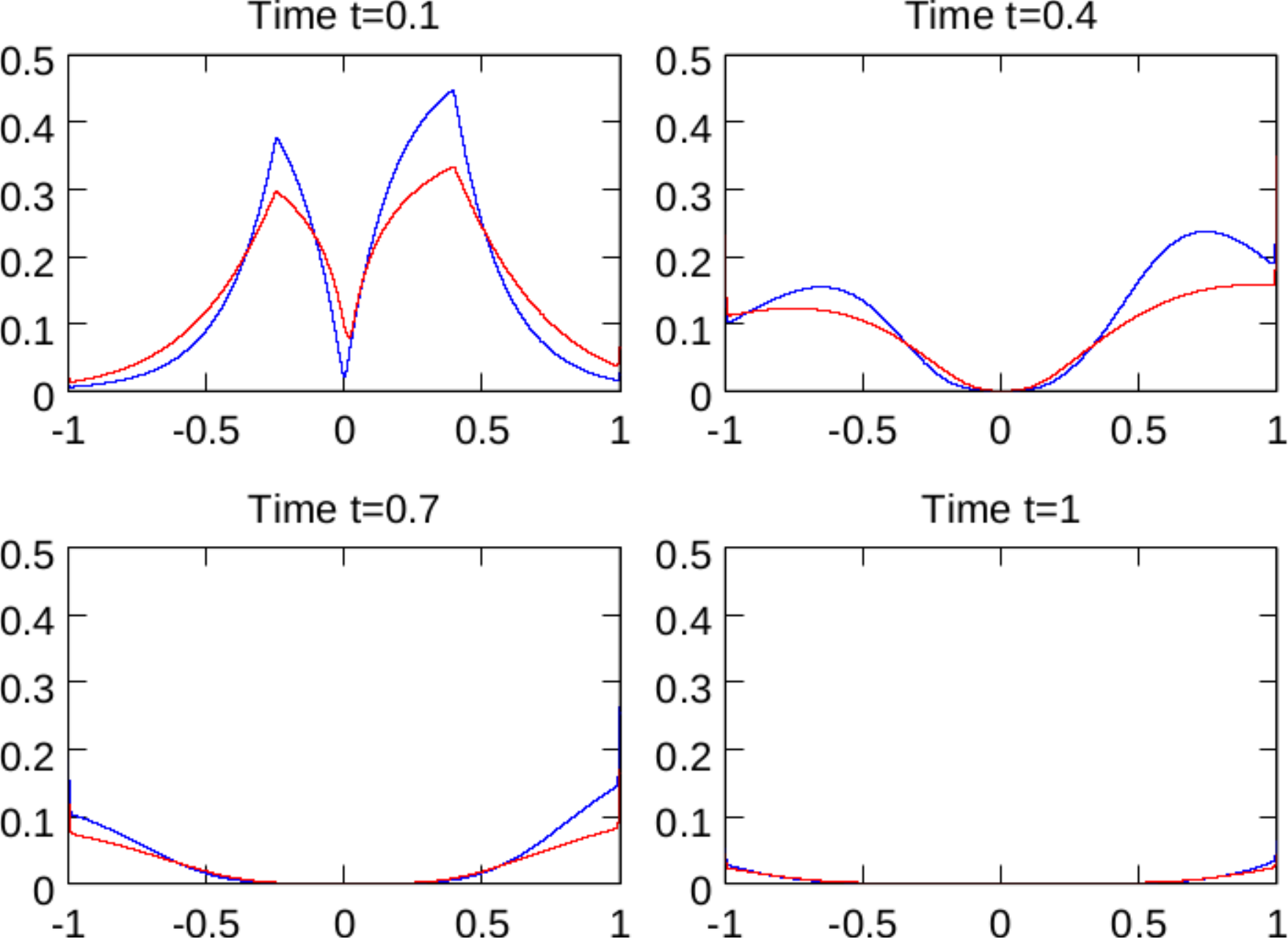}
\caption{Evolution of the density $\rho$ for different energies in time. The blue line corresponds to
  $E = 3 \rho$, the red one to $E=e^{3\rho}$.}\label{f:ex4}
\end{center}
\end{figure}

\section*{Conclusion}
In this paper we presented a novel mean field game approach for evacuation and fast exit situations in crowd motion.
We motivated the model on the microscopic level and discussed its generalization in the mean field limit. The optimality
system of the resulting parabolic optimal control problem, is a mean field game and establishes interesting links to well
known models, like Hughes model for pedestrian flow. Furthermore the proposed model gives new insights into the
mathematical modeling of boundary conditions for pedestrian crowds. We present first existence and uniqueness results
for the proposed optimal control approach and illustrate the behavior of solutions with various numerical simulations.

\noindent The general formulation of the proposed model poses interesting challenges for future research.
In general the derivation of mean-field limit with nonlinear mobilities is of significant importance and
raised a lot of interest in the scientific community, cf. e.g. \cite{CD2012}. In addition challenging questions and
problems arise in the mathematical analysis of the optimal control approach for convex as well as non-convex problems.
Another important direction of research will focus on more realistic modeling assumptions. We have seen in the second
example of Section \ref{s:numerics} that people anticipate the density of all others in time, which is questionable
in many situations. Therefore it would be interesting to study the
behavior of \eqref{e:optcon} with a temporal discount factor, as proposed in \cite{LW2011}. Finally the development of a
 numerical solver for 2D problems and its generalization to non-convex problems would allow
for more realistic simulations.

\bibliographystyle{siam}
\bibliography{hughes}

\end{document}